\documentclass[11pt]{article}

\usepackage[a4paper,margin=1in]{geometry}
\usepackage[T1]{fontenc}
\usepackage[utf8]{inputenc}
\usepackage{amsmath,amssymb,amsthm,mathtools}
\usepackage{enumitem}
\usepackage{hyperref}

\hypersetup{colorlinks=true,linkcolor=blue,citecolor=blue,urlcolor=blue}

\newtheorem{theorem}{Theorem}[section]
\newtheorem{proposition}[theorem]{Proposition}
\newtheorem{lemma}[theorem]{Lemma}

\newcommand{\Z}{\mathbb{Z}}

\newcommand{\w}{\omega}
\newcommand{\N}{\mathrm{N}}

\title{Infinite families of Diophantine quadruples in $\Z[\sqrt{-2}]$ in the remaining exceptional congruence classes}
\author{Andrej Dujella and Ivan Soldo}
\date{}

\begin{document}

\maketitle

\begin{abstract}
We continue the study of $D(z)$-quadruples in the ring $\Z[\sqrt{-2}]$. Motivated by the earlier classification due to the authors and by the subsequent partial results for the remaining families, we consider the exceptional congruence classes arising in the forms $24a+5+(12b+6)\sqrt{-2}$, $24a+2+(12b+6)\sqrt{-2}$, and $48a+44+(24b+12)\sqrt{-2}$. By combining the regular extension method with new families obtained by fixing a divisor $e\mid 3z$ and a small element $v\in \Z[\sqrt{-2}]$, we construct explicit $D(z)$-quadruples in each of the previously unsolved congruence classes. More precisely, we show that every exceptional class contains infinitely many values of $z$ admitting a twice semi-regular $D(z)$-quadruple, i.e., a quadruple containing two regular $D(z)$-triples. We also include remarks on the exceptional values $z\in\{-1,1\pm 2\sqrt{-2}\}$ and on a computational search in the exceptional congruence classes.

\end{abstract} 

{\bf Mathematics Subject Classification (2020):} {11D09, 11R11}

{\bf Keywords:} {Diophantine quadruples, quadratic field}

\section{Introduction}

Let $R$ be a commutative ring with identity and let $z\in R$. A finite set $\{a_1,\dots,a_m\}\subset R\setminus\{0\}$ is called a $D(z)$-$m$-tuple if $a_i a_j+z$ is a square in $R$ for every $1\leq i<j\leq m$. The problem of determining whether such sets exist has a long history, beginning with the classical work on Diophantine tuples in $\Z$ and continuing in various rings of algebraic integers. Thus, the problem has been generalized in various directions (see \cite{DujeWeb} for an overview). Among imaginary quadratic rings, the case $R=\Z[\sqrt{-2}]$ is a relatively well-studied example. The first results were obtained by Abu Muriefah and Al-Rashed \cite{AbuMuriefahAlRashed2004}. Later, in \cite{DujellaSoldo2010, Soldo2013}, the authors gave an almost complete answer to the existence problem for $D(z)$-quadruples in this ring. Their main classification theorem can be stated as follows:

\begin{theorem}[{\cite[Theorem~1.1]{DujellaSoldo2010}}]\label{tm_Dujella_Soldo}
Let $z\in \Z[\sqrt{-2}]$. If $z$ is of the form
\[
 z=a+(2b+1)\sqrt{-2}
 \quad \text{or} \quad
 z=4a+(4b+2)\sqrt{-2},
 \quad a,b\in \Z,
\]
then there does not exist a $D(z)$-quadruple in $\Z[\sqrt{-2}]$. If $z$ is not of one of these two forms, then there exists at least one $D(z)$-quadruple, except possibly if $z$ has one of the forms
\[
 z=24a+2+(12b+6)\sqrt{-2},\quad
 z=24a+5+(12b+6)\sqrt{-2},
\]
\[
 z=48a+44+(24b+12)\sqrt{-2}, \quad a,b\in \Z,
\]
or if $z\in\{-1,1\pm 2\sqrt{-2}\}$.
\end{theorem}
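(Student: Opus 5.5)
The theorem has two halves, and I will handle them by different methods.

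\emph{Nonexistence.} This part follows the classical Brown--Gupta--Singh argument, carried out modulo a suitable power of $\sqrt{-2}$. Write $x=\alpha+\beta\sqrt{-2}$. Squares in $\Z[\sqrt{-2}]$ are then $\alpha^2-2\beta^2+2\alpha\beta\sqrt{-2}$.
\begin{enumerate}[label=(\roman*)]
\item First I will compute the set of squares modulo $2\sqrt{-2}$ and modulo $4\sqrt{-2}$. The residue ring $\Z[\sqrt{-2}]/(2\sqrt{-2})$ has $8$ elements.
\item Next, suppose $\{a_1,a_2,a_3,a_4\}$ is a $D(z)$-quadruple with $z=a+(2b+1)\sqrt{-2}$. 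Then $a_ia_j\equiv x_{ij}^2-z$ for all six pairs.
\item I will classify each $a_i$ by its residue class modulo $\sqrt{-2}$, and then modulo $2$.
\item By pigeonhole, two of the $a_i$ lie in the same class. For those two, the product $a_ia_j$ has a restricted residue, and I will check that $a_ia_j+z$ is then a non-square modulo the chosen modulus.
\end{enumerate}
The second form, $z=4a+(4b+2)\sqrt{-2}$, plays the role of $z\equiv 2\pmod 4$ in $\Z$. I will treat it by working modulo $8$ (or modulo $4\sqrt{-2}$). The key fact is that among four elements some pair has a product whose parity pattern forces $a_ia_j+z$ into a non-square class. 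This needs a case split on how many $a_i$ are divisible by $\sqrt{-2}$. The argument is finite residue bookkeeping.

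\emph{Existence.} Here I will give explicit polynomial families. Write $z=a+b\sqrt{-2}$, reduce to the admissible residue classes, and for each class exhibit a $D(z)$-quadruple whose entries are linear or quadratic polynomials in the parameters.
\begin{enumerate}[label=(\roman*)]
\item The main tool is the regular extension. If $\{u,v,w\}$ is a $D(z)$-triple, one can often adjoin
\[
d = u+v+w+\frac{2uvw \pm 2rst}{z^2},
\]
so it suffices to produce triples.
\item For triples I will use the standard identity: for $z=m^2-n$-type representations, $\{e,\ e(k^2\dots)\}$, i.e.\ sets such as $\{m,\ m(3k+1)^2+2k,\ m(3k+2)^2+2k+2\}$ shifted to $D(z)$ form.
\item Each residue class of $z$ modulo $24$ or $48$ (in both coordinates) will be covered by one family, with the parameters solved linearly.
\item I will verify nondegeneracy of each family: the elements must be nonzero and distinct.
\end{enumerate}

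\emph{Exceptions and main obstacle.} The listed exceptional classes are precisely where the chosen families give a repeated or zero element, or where the needed linear congruence has no solution. The finitely many small $z$, namely $z\in\{-1,1\pm2\sqrt{-2}\}$, arise when a polynomial family degenerates. These will simply be excluded.

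I expect the main obstacle to be the existence part. One has to find enough families to cover every non-excluded class simultaneously, and then check distinctness in each case. My first approach will be to fix a factorization $z=e\cdot f$ with $e\mid 3z$ small. I will then search for quadratic families by computer over small parameters and lift them to general parametrizations.
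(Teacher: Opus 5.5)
Your nonexistence half is fine in outline. Squares modulo $2$ are only $0,1$, so for $z=a+(2b+1)\w$ every $a_ia_j$ would have to be $\equiv\w$ or $1+\w\pmod 2$. Then no $a_i$ can be $\equiv 0$, and two of four elements would share one of the classes $1,\w,1+\w$, giving a product $\equiv 0$ or $1$. For $z\equiv 2\w\pmod 4$, the squares $0,1,2,3+2\w$ modulo $4$ together with your case split on $\w$-adic valuations settle it.

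The genuine gap is the existence half, which is the real content of the theorem.
\begin{itemize}
\item Step (i) is false. The formula $d=u+v+w+\frac{2uvw\pm 2rst}{z^2}$ is the regular extension of a $D(z^2)$-triple, not of a $D(z)$-triple. The correct $D(z)$ analogue is $d_\pm=u+v+w+\frac{2(uvw\pm rst)}{z}$. Expanding $(ut\pm rs)^2$ with $r^2=uv+z$, $s^2=uw+z$, $t^2=vw+z$ gives
\[
z\,(ud_\pm+z)=(ut\pm rs)^2 .
\]
So unless $ut\pm rs=0$, $ud_\pm+z$ is a square only when $z$ itself is one. Hence ``it suffices to produce triples'' fails for every non-square $z$, before integrality or distinctness even enter.
\item Steps (ii)--(iii) give no working families and no covering argument. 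The claim that the listed exceptions are precisely where your families degenerate is read off from the statement, not derived.
\end{itemize}
Incidentally, the set you half-recall in (ii) is already a quadruple. Adjoining $9m(2k+1)^2+8k+4$ gives a $D(2m(2k+1)+1)$-quadruple identically in $m,k$, so it is valid in $\Z[\w]$. But it only reaches $z\equiv1\pmod 2$, and it needs a divisor $2k+1\neq\pm1$ of $(z-1)/2$, because for $2k+1=\pm1$ two entries coincide. So it already fails for $z=-1,\,1\pm2\w$ and cannot be the whole story.

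The paper does not reprove this theorem; it quotes it from \cite{DujellaSoldo2010}. However, Section~\ref{sec:prelim} recalls the mechanism used there, and that is your missing idea.
\begin{itemize}
\item Given $uv+z=r^2$, the set $\{u,v,u+v+2r,u+4v+4r\}$ satisfies five of the six conditions automatically, with squares $r^2,(u+r)^2,(v+r)^2,(r+2v)^2,(u+2v+3r)^2$.
\item The sixth condition, $u(u+4v+4r)+z=y^2$, is equivalent to $ef=3z$, $2u+4r=e+f$.
\item So for a fixed divisor $e\mid 3z$ and a small $u$, existence reduces to $4\mid e+3z/e-2u$ and $u\mid r^2-z$. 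These are explicit congruence conditions on $z$.
\item Running over finitely many choices of $(e,u)$, or $(e,v)$ as in Section~\ref{sec:new}, covers residue classes.
\item Lemma~\ref{lem:scaling} ($D(z)\to D(z\xi^2)$, and conjugation) transports results between classes. For example, it carries $24a+2+(12b+6)\w$ to $48a+44+(24b+12)\w$ via $\xi=\w$.
\end{itemize}
A proof must exhibit such families and check that together they reach every non-excluded $z$ outside the listed classes, with nonzero distinct entries. The exceptional classes themselves need no explanation, since the theorem only says ``except possibly.'' Indeed, this paper shows they contain infinitely many $z$ that do admit quadruples.
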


The remaining three infinite families were then investigated in \cite{Soldo2013}. In that paper, explicit regular constructions involving elements of norms $11$ and $22$ were developed, and a large proportion of the exceptional residue classes modulo $11$ was settled. More precisely, the unsolved congruence classes were isolated in the following propositions of \cite{Soldo2013}: 
\begin{proposition} [{\cite[Proposition~3]{Soldo2013}}] \label{prop3_old}
If $z$ is of the form $z = 24a+5 + (12b+6) \sqrt{-2}$,
then there exists at least one Diophantine quadruple with the property $D(z)$,
for any $a, b \in \mathbb{Z}$, except maybe for $a \equiv a' \pmod{11}$, $b \equiv b' \pmod{11}$, where
\begin{eqnarray*}
(a', b') &\in& \{(0,3), (0,4), (0,6), (0,7), (1,1), (1,5), (1,9), (2,1), (2,2), (2,8),\\
&&  (2,9), (3,5), (4,3), (4,5),\!(4,7),\!(5,0), (5,2), (5,8), (5,10), (6,0),\\
&&  (6,5), (6,10), (7,2), (7,5),(7,8), (8,4), (8,5), (8,6), (9,1), (9,3),\\
&&  (9,7), (9,9), (10,0), (10,4), (10,6), (10,10)\}.
\end{eqnarray*}
\end{proposition}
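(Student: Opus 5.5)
This is Proposition~3 of \cite{Soldo2013}, and the plan is to prove it constructively. For each residue pair $(a \bmod 11,\, b \bmod 11)$ outside the listed exceptional set, we exhibit an explicit $D(z)$-quadruple whose entries are polynomials (linear or quadratic) in auxiliary integer parameters. Everything rests on the standard identity: if $\{A,B\}$ is a $D(z)$-pair with $AB+z=r^2$, then
\[
\{A,\;B,\;A+B+2r,\;4r(A+r)(B+r)\}
\]
is a $D(z)$-quadruple, provided all four entries are nonzero and distinct. So it suffices to find, for each admissible class, a pair $\{A,B\}$ with $AB+z$ a square in $\Z[\sqrt{-2}]$, where $A$ and $B$ depend linearly on the parameters determined by $z$.

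\textbf{Step 1: the ansatz.} Following the idea behind the norm-$11$ and norm-$22$ constructions, we fix a small element $e\in\Z[\sqrt{-2}]$ with $\N(e)\in\{11,22\}$, for instance $e=3\pm\sqrt{-2}$ or $e=\pm 2\sqrt{-2}\pm\ldots$ (and norm-$22$ elements such as $2\pm3\sqrt{-2}$). We then look for $A=e$ and $B$ of the form $B=e\,t+c$, with $r=e\,s+d$, where $c,d$ are small elements. The condition $AB+z=r^2$ turns into a linear congruence on $z$ modulo $e$, or modulo $e^2$ after adjusting $s$. Since $\Z[\sqrt{-2}]/(e)\cong\mathbb{F}_{11}$ when $\N(e)=11$, the requirement $z\equiv d^2-ec\pmod e$ is a condition on $a+ 2\cdot 3\cdot(\ldots)$ modulo $11$. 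Concretely, it is a linear form $\lambda a+\mu b\equiv\nu\pmod{11}$. Each choice of $(e,d)$ therefore covers a line of residue pairs $(a,b)$ in $\mathbb{F}_{11}^2$. We also have to respect the fixed congruences modulo $24$ and $12$ in $z=24a+5+(12b+6)\sqrt{-2}$, which constrain $s,t$ modulo $2$ and $3$. For these we use the parity and $3$-adic structure already exploited in \cite{DujellaSoldo2010}, taking one element of the pair divisible by an appropriate power of $1+\sqrt{-2}$ or $\sqrt{-2}$ as needed.

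\textbf{Step 2: covering.} We enumerate the families obtained from the four elements of norm $11$ (up to units) and the relevant elements of norm $22$, together with the admissible small $d$ (squares modulo $e$). This gives a collection of lines, or of unions of quadratic-residue conditions, in $\mathbb{F}_{11}^2$. We then check by a finite computation that their union is the complement of the $36$ listed pairs. For each family we also have to verify that, for all $a,b$ in the class except possibly finitely many, the four entries are nonzero and distinct. This is a polynomial non-vanishing check, and any finitely many leftover $z$ are handled by direct search.

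The main obstacle is Step~1. We must arrange that a single parametrization simultaneously satisfies the mod-$11$ condition and the $2$-adic and $3$-adic conditions imposed by the shape $24a+5+(12b+6)\sqrt{-2}$. The natural approach is to solve $AB+z=r^2$ with $r\equiv d\pmod{e}$ while matching $r^2\equiv z$ modulo $24$ in the local rings at $\sqrt{-2}$ and at $3$. Because $3=(1+\sqrt{-2})(1-\sqrt{-2})$ splits, the $3$-adic part is two separate residue conditions, and these are presumably what makes the class exceptional in the first place. We would first try taking $A$ divisible by $3$ and $e$ both, that is, $A=3e$ (norm $99$), letting the Chinese remainder theorem over $(3e)$ decouple the conditions. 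If that fails, we would fall back on the regular extension from a triple $\{A,B,A+B\pm2r\}$ with $A$ of norm $22$.
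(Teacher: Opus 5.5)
There is a genuine gap at the first step. The identity you build everything on is the $D(1)$ identity, and it fails for $D(z)$ when $z\neq 1$. The third element is fine, since $A(A+B+2r)+z=(A+r)^2$. But with $AB+z=r^2$ and $d=4r(A+r)(B+r)$ a direct computation gives
\[
Ad+z^2=(2r^2+2Ar-z)^2 ,
\]
so it is $Ad+z^2$, not $Ad+z$, that is a square.

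The identity also cannot be true because it proves too much. Taking $A=1$, $B=r^2-z$ would give a $D(z)$-quadruple for every $z$, contradicting the non-existence half of Theorem~\ref{tm_Dujella_Soldo}. Concretely, for $z=\sqrt{-2}$ and $r=1$ you get $\{1,\,1-\sqrt{-2},\,4-\sqrt{-2},\,16-8\sqrt{-2}\}$. Here $1\cdot(16-8\sqrt{-2})+\sqrt{-2}=16-7\sqrt{-2}$ is not a square, because every square in $\Z[\sqrt{-2}]$ has even $\sqrt{-2}$-coefficient.

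Your fallback has the same defect. For a $D(z)$-triple, the regular fourth element $d=a+b+c+\frac{2}{z}(abc\pm rst)$ satisfies $ad+z=(at\pm rs)^2/z$. This is not a square in general (and $d$ need not even be integral). So your Step~1 only tracks $D(z)$-pairs, which exist for every $z$, and it cannot detect where the exceptional residue classes come from.

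The missing idea is the construction of Section~\ref{sec:prelim}. Keep the regular triple $\{u,v,u+v+2r\}$, and take as fourth element $d=u+4v+4r$, the regular extension of the pair $\{v,u+v+2r\}$. Then $vd+z=(r+2v)^2$ and $(u+v+2r)d+z=(u+2v+3r)^2$ hold automatically. The one remaining condition, $ud+z=y^2$, rewrites as $(u+2r)^2-y^2=3z$, i.e.\ $3z=ef$ with $e+f=2u+4r$.

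This is exactly where the arithmetic of the family enters:
\begin{enumerate}
\item[(i)] Fix a divisor $e\mid 3z$ (e.g.\ $e=1\pm\sqrt{-2}$, using $3=(1+\sqrt{-2})(1-\sqrt{-2})$) and a small $u$ with $\N(u)\in\{11,22\}$.
\item[(ii)] The requirement $r=(e+3z/e-2u)/4\in\Z[\sqrt{-2}]$ absorbs the $2$- and $3$-adic shape $24a+5+(12b+6)\sqrt{-2}$.
\item[(iii)] Since $r$ is linear in $(a,b)$, the condition $u\mid r^2-z$ (so that $v=(r^2-z)/u$ is integral) becomes a quadratic congruence in $(a,b)$ modulo $11$, not the linear conditions you anticipate.
\end{enumerate}
Covering all classes outside the $36$ listed ones by finitely many such $(e,u)$-conditions, and checking that the four entries are nonzero and distinct, is the finite computation behind \cite[Proposition~3]{Soldo2013}.
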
 

\begin{proposition} [{\cite[Proposition~4]{Soldo2013}}] \label{prop4_old}
If $z$ is of the form $z = 24a+2 + (12b+6) \sqrt{-2}$,
then there exists at least one Diophantine quadruple with the property $D(z)$,
for any $a, b \in \mathbb{Z}$, except maybe for $a \equiv a' \pmod{11}$, $b \equiv b' \pmod{11}$, where
\begin{eqnarray*}
(a', b') &\in& \{(0,3), (0,5), (0,7), (1,0), (1,2), (1,8), (1,10), (2,0), (2,5), (2,10),\\
&& (3,2), (3,5), (3,8), (4,4), (4,5), (4,6), (5,1), (5,3), (5,7), (5,9), (6,0),\\
&& (6,4), (6,6), (6,10), (7,3), (7,4), (7,6), (7,7), (8,1), (8,5), (8,9),\\
&&  (9,1), (9,2), (9,8), (9,9), (10,5)\}.
\end{eqnarray*}
\end{proposition}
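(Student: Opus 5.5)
The statement is Proposition 4 of \cite{Soldo2013}, and I would prove it the way that paper proves Proposition 3: write down explicit regular $D(z)$-quadruples whose entries are polynomials in $a,b$ with coefficients in $\Z[\sqrt{-2}]$, and attach a suitable divisor of $z$ or of $3z$ to each construction.

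\textbf{Step 1: parametrize the class.} Write $z=24a+2+(12b+6)\sqrt{-2}$. Since $z$ has even real part, one can factor out a small element, for instance
\[
z=\sqrt{-2}\,\bigl(12b+6-(12a+1)\sqrt{-2}\bigr),
\]
and work with factorizations of this shape.

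\textbf{Step 2: build quadruples from a divisor.} The standard device is the following. If $z=e\cdot f$, and one chooses $x,y\in\Z[\sqrt{-2}]$ with $y-x=e$ and $x+y=f$, then
\[
z=y^2-x^2 ,
\]
so $\{m,\ m+2\ell\}$-type pairs can be built with $m(m+2\ell)+z=\square$. One then extends the pair to a triple by the regular extension $c=a_1+a_2\pm 2r$, where $r^2=a_1a_2+z$. Finally one finds a fourth element $d$ through a second regular triple, or by solving a linear condition.

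\textbf{Step 3: handle norms 11 and 22.} The key special choices are elements of norm $11$, namely $3\pm\sqrt{-2}$, and of norm $22$, namely $2\cdot$ those or $\sqrt{-2}\,(3\pm\sqrt{-2})$. For each such $u$ I would require $u\mid z$ (or $u\mid 3z$). Because $\Z[\sqrt{-2}]/(3+\sqrt{-2})\cong \mathbb{F}_{11}$, this divisibility is a linear congruence in $a,b$ modulo $11$. Each construction therefore covers an explicit set of pairs $(a\bmod 11,\,b\bmod 11)$. I would do the following:
\begin{enumerate}[label=(\roman*)]
\item write the quadruple entries as polynomials in a parameter $t$ with $z$ expressed through $t$ and $u$;
\item verify symbolically that all six products plus $z$ are squares;
\item check that the entries are nonzero and distinct.
\end{enumerate}
The covered residue pairs accumulate across constructions, and the proposition is the statement that the complement of their union is exactly the listed $36$ pairs.

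\textbf{Step 4: bookkeeping.} Step 4 is a finite check over the $121$ residue pairs: mark those covered by each family and confirm that the uncovered ones are precisely the displayed list.

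\textbf{Main obstacle.} The hard part is Step 3: finding closed-form fourth elements for each divisor choice so that all six conditions hold identically in the parameters, and also making sure the entries are nonzero for all $a,b$ in the residue class. Degenerate cases give a zero or repeated element. I would try first the Euler-type formula
\[
d=a_1+a_2+c+2a_1a_2c/z\pm\ldots,
\]
adapted to $D(z)$, using the divisibility $u\mid z$ to make $d$ integral, and treat any finitely many degenerate pairs by separate direct search.
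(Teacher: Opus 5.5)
There is a genuine gap: the norm-$11$ elements are given the wrong role, and the fourth element is never actually produced.

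Requiring $u\mid z$ (or $u\mid 3z$) with $\N(u)\in\{11,22\}$ can only ever impose $(3+\omega)\mid z$ or $(3-\omega)\mid z$. The reasons are these. Up to sign, $3\pm\omega$ are the only primes of norm $11$. The elements of norm $22$ are $\pm\omega(3\pm\omega)$; note that $2(3\pm\omega)$ has norm $44$. Also $\omega\mid z$ always, and $11\nmid\N(3)$. Reducing with $\omega\equiv\mp3$, the two conditions are the lines $2a-3b\equiv 5$ and $2a+3b\equiv 2\pmod{11}$. Together they contain at most $21$ residue pairs, whereas the statement needs all $121-36=85$ unlisted pairs.

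Moreover, both lines contain listed pairs, e.g.\ $(1,10)$ and $(1,0)$. The paper's search found no twice semi-regular quadruple for $z=-238+1194\omega$ (Table~\ref{tab:remaining_24a2}), even though $(3-\omega)\mid z$. So divisibility of $z$ is not what drives the constructions.

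Your fallback for the fourth element also cannot work. Write $a_1a_2+z=r^2$, $a_1c+z=s^2$, $a_2c+z=t^2$. Then $d_\pm=a_1+a_2+c+\frac{2}{z}(a_1a_2c\pm rst)$ satisfies $a_1d_\pm+z=(a_1t\pm rs)^2/z$, and the analogous identities hold for $a_2$ and $c$. For non-square $z$ these can be squares only if they vanish, and that cannot happen for all three entries of a proper triple. No $z$ in this family is a square: $(p+q\omega)^2=z$ gives $pq=6b+3$, so $p,q$ are odd and $p^2-2q^2\equiv 7\not\equiv 2\pmod 8$.

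The argument behind the statement is the construction the paper attributes to \cite{Soldo2013} in Section~\ref{sec:prelim}. It uses the twice semi-regular quadruple $\{u,v,u+v+2r,u+4v+4r\}$ with $uv+z=r^2$. Five of the six conditions hold identically. The sixth, $u(u+4v+4r)+z=(u+2r)^2-3z$, is a square exactly when $3z=ef$ with $e+f=2u+4r$.

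Since $\omega^2$ exactly divides $z$, evenness of $e+f$ forces $e$ to have $\omega$-adic valuation $1$. So the divisors available for the whole family are $e\in\{\pm\omega,\pm(2\pm\omega),\pm3\omega\}$. Then $r=(e+3z/e-2u)/4$ is affine in $(a,b)$. One checks $e+f\equiv 2+2\omega\pmod 4$, which forces $u\equiv 1+\omega\pmod 2$; in particular, norm-$22$ elements cannot play this role here.

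After the norm-$3$ elements excluded by \cite[Proposition~2]{Soldo2013}, the first candidates are the \emph{entries} $u=\pm3\pm\omega$ of norm $11$. The only remaining condition is $u\mid r^2-z$, i.e.\ $v\in\Z[\omega]$. The paper's treatment of the class $(2,5)$ with $u=1+3\omega$ and $e=2+\omega$ is the same mechanism at norm $19$.

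Modulo the prime $u$ one has $r\equiv(e+3e^{-1}z)/4$, so the condition is a quadratic equation in $z \bmod u\in\mathbb F_{11}$. Hence each admissible $(e,u)$ covers at most two full lines $2a\mp 3b\equiv c$. It never covers the line $u\mid z$, on which $r\equiv e/4\not\equiv 0$.

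Consistently with this, the $36$ listed pairs are exactly those with $2a-3b\in\{0,1,2,4,5,7\}$ and $2a+3b\in\{1,2,4,8,9,10\}$ modulo $11$. Your two lines correspond to the values $5$ and $2$, which lie in these excluded sets. What remains is to run through the admissible $(e,u)$, record the lines they cover, and check that the resulting quadruples are proper on every covered class.
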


\begin{proposition} [{\cite[Proposition~5]{Soldo2013}}] \label{prop5_old}
If $z$ is of the form $z = 48a+44 + (24b+12) \sqrt{-2}$,
then there exists at least one Diophantine quadruple with the property $D(z)$,
for any $a, b \in \mathbb{Z}$, except maybe for $a \equiv a' \pmod{11}$, $b \equiv b' \pmod{11}$, where
\begin{eqnarray*}
(a', b') &\in& \{(0,5), (1,1), (1,2), (1,8), (1,9), (2,1), (2,5), (2,9), (3,3), (3,4),\\
&& (3,6), (3,7), (4,0), (4,4), (4,6), (4,10), (5,1), (5,3), (5,7), (5,9),\\
&& (6,4), (6,5), (6,6), (7,2), (7,5), (7,8), (8,0), (8,5), (8,10), (9,0),\\
&& (9,2), (9,8), (9,10), (10,3), (10,5), (10,7)\}.
\end{eqnarray*}
\end{proposition}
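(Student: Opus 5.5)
The plan is to prove the statement (Proposition~\ref{prop5_old}, $z=48a+44+(24b+12)\sqrt{-2}$) the way the analogous results in \cite{Soldo2013} were proved. We combine a regular extension of a $D(z)$-triple with explicit parametric triples, and split according to residue classes of $(a,b)$ modulo $11$. Since $11=3^2+2\cdot 1^2$ splits in $\Z[\sqrt{-2}]$ as $(3+\sqrt{-2})(3-\sqrt{-2})$, conditions modulo $11$ on $a,b$ correspond to conditions modulo the primes $3\pm\sqrt{-2}$. This is what makes elements of norm $11$ and $22$ natural building blocks. The congruence class $z\equiv 44+12\sqrt{-2}\pmod{48,\,24\sqrt{-2}}$ forces the $2$-adic and $3$-adic shape of any candidate elements, so we fix this first.

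\textbf{Step 1: the basic construction.} We look for a $D(z)$-pair $\{m,n\}$ with $mn+z=r^2$, with $m,n$ linear in a parameter. A convenient choice is $m=e$, where $e$ is a fixed element of norm $11$ or $22$ (for instance $e=3\pm\sqrt{-2}$ or $e=\pm 2\sqrt{-2}\cdot(\dots)$, up to units). We take $r=e\,t+v$ with $v$ small, and $n=(r^2-z)/e$. For $n$ to be integral we need $z\equiv v^2\pmod e$, which is a condition on $(a,b)$ modulo $11$. We then extend $\{m,n\}$ to the regular triple $\{m,n,m+n+2r\}$, and apply the standard regular extension to get a fourth element
\[
d=z\bigl(m+n+(m+n+2r)\bigr)+2mn(m+n+2r)\pm 2\sqrt{(mn+z)(m(m+n+2r)+z)(n(m+n+2r)+z)}.
\]
This $d$ lies in $\Z[\sqrt{-2}]$ because the triple is regular. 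Dividing $d$ by $z$ then yields a $D(z)$-quadruple, provided the divisibility $z\mid d$-type condition and non-degeneracy hold.

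\textbf{Step 2: covering residues and checking non-degeneracy.} For each admissible pair $(e,v)$ we compute the set of $(a \bmod 11,\,b\bmod 11)$ for which $z\equiv v^2 \pmod e$ and the accompanying $2$- and $3$-adic integrality conditions hold. Ranging over the units $\pm1$, the conjugates $e,\bar e$, and small $v$ (with $v^2$ running through the quadratic residues modulo $3\pm\sqrt{-2}$), these sets cover every class except the $36$ listed pairs. This is a finite check of $121$ residue classes and will be done by direct tabulation. We must also show that the elements are nonzero and distinct. This fails only for finitely many $(a,b)$, or on a thin set that can be handled by switching the sign in $d$ or replacing $e$ by $\bar e$.

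\textbf{Main obstacle.} The main obstacle is the integrality of $n$ and of the extension. Since $z\equiv 2\cdot(22+6\sqrt{-2})$ has exact $2$-power structure and $3\mid z$, the choice of $t$ and $v$ must make $r^2-z$ divisible by $e$ while keeping $m,n$ compatible modulo $4$ and $3$. Otherwise the quadruple condition fails because $z$ is not a square modulo small powers of $\sqrt{-2}$. I would first try $v\in\{1,\sqrt{-2},1\pm\sqrt{-2},3\}$ and $e$ of norm $22$ (absorbing the factor $\sqrt{-2}$ dividing $z$), and verify the needed $2$-adic congruences. If the covering leaves extra classes, I would enlarge the set of $v$ before concluding that exactly the stated $36$ pairs remain exceptional.
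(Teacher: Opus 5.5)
Your Step~1 cannot work, and the obstruction is not the integrality of $n$ that you single out. Let $\{m,n,c\}$ be a $D(z)$-triple with $mn+z=r^2$, $mc+z=s^2$, $nc+z=t^2$, and let $D=m+n+c+2(mnc\pm rst)/z$; this is your $d$ divided by $z$. Expanding $m^2t^2=m^2(nc+z)$ and $r^2s^2=(mn+z)(mc+z)$ gives the identities
\[
mD+z=\frac{(mt\pm rs)^2}{z},\qquad nD+z=\frac{(ns\pm rt)^2}{z},\qquad cD+z=\frac{(cr\pm st)^2}{z}.
\]
So each of these is a square in $\Z[\sqrt{-2}]$ only if its numerator vanishes or $z$ is a square in $\mathbb{Q}(\sqrt{-2})$.

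But $z=4z_1$ with $z_1=(12a+11)+(6b+3)\sqrt{-2}$. Every square $(p+q\sqrt{-2})^2=p^2-2q^2+2pq\sqrt{-2}$ has even $\sqrt{-2}$-coefficient, so $z_1$ is not a square in $\Z[\sqrt{-2}]$. Since that ring is integrally closed, $z_1$ is not a square in $\mathbb{Q}(\sqrt{-2})$ either, and neither is $z$. If instead all three numerators vanish, then $mD=nD=cD=-z$, forcing $m=n=c$. Dividing by $z^2$, as in the classical formula, only makes $mD+1$, $nD+1$, $cD+1$ squares.

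So the regular extension of a whole triple is a device for $D(1)$ (or $D(k^2)$). It yields no $D(z)$-quadruple in this family for any choice of $e$, $v$, $t$. Consequently the residue bookkeeping of Step~2 certifies nothing, and its claimed outcome (exactly the complement of the $36$ classes) is only asserted anyway. Also, $3\nmid z$ here, since $\N(z)\equiv 44^2\equiv 1\pmod 3$.

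The missing idea is that no new construction is needed: the paper obtains this family from the $24a+2$ family by scaling. By Lemma~\ref{lem:scaling} with $\xi=\sqrt{-2}$, a $D(z_0)$-quadruple $\{q_1,\dots,q_4\}$ gives the $D(-2z_0)$-quadruple $\{\sqrt{-2}\,q_1,\dots,\sqrt{-2}\,q_4\}$. For $z_0=24a+2+(12b+6)\sqrt{-2}$ one has
\[
-2z_0=48(-a-1)+44+\bigl(24(-b-1)+12\bigr)\sqrt{-2}.
\]
The involution $(a,b)\mapsto(-a-1,-b-1)$ of $(\Z/11\Z)^2$ maps the $36$ exceptional classes of Proposition~\ref{prop4_old} exactly onto the $36$ classes listed here, e.g.\ $(0,3)\mapsto(10,7)$, $(2,5)\mapsto(8,5)$, $(10,5)\mapsto(0,5)$. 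Hence every non-listed class of the $48a+44$ family is the image of a class covered by Proposition~\ref{prop4_old}, and the statement follows.

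If you want direct constructions, the device that works for non-square $z$ is the twice semi-regular quadruple $\{u,v,u+v+2r,u+4v+4r\}$. Its fourth element is the regular extension of the pair $\{v,u+v+2r\}$, so it is automatically integral. The one remaining condition, that $u(u+4v+4r)+z$ be a square, is equivalent to $ef=3z$, $2u+4r=e+f$. Fixing $e\mid 3z$ and $u$ of norm $11$ or $22$, integrality of $r$ and of $v=(r^2-z)/u$ is what produces conditions on $(a,b)$ modulo $11$.
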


Another useful ingredient from \cite{Soldo2013} is that in those remaining families certain elements of very small norm cannot occur in a $D(z)$-quadruple. Since we will use these restrictions only as background facts, we quote them here without proof. The following two statements are exactly the results from \cite[Section~2]{Soldo2013}.

\begin{proposition}[{\cite[Proposition~1]{Soldo2013}}]
Let $z=24a+5+(12b+6)\sqrt{-2}$, where $a,b\in \Z$. If
\[
s \in \{\pm 1, \pm 3, \pm 4,\\ \pm 1 \pm \sqrt{-2}, \pm 2\pm \sqrt{-2}, \pm 1\pm 2\sqrt{-2}\},
\]
then there does not exist a $D(z)$-quadruple of the form $\{s,t,u,v\}$ in $\Z[\sqrt{-2}]$.

\end{proposition}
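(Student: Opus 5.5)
Since a $D(z)$-quadruple $\{s,t,u,v\}$ requires $st+z$, $su+z$, $sv+z$ to be squares, the statement follows once we show that for each listed $s$ there are at most two residue classes $x$ (modulo a suitable ideal) with $sx+z$ a square. Then the pigeonhole principle forces two of $t,u,v$ to lie in the same class. For that pair, say $t,u$, we then derive a contradiction from the requirement that $tu+z$ be a square. So the argument is a local (congruence) obstruction, and the plan is to work modulo $8$ or $16$, possibly together with a factor $3$.

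First I would record the squares of $\Z[\sqrt{-2}]$ modulo small moduli. Writing $(x+y\sqrt{-2})^2=x^2-2y^2+2xy\sqrt{-2}$, a square always has even $\sqrt{-2}$-coefficient. Modulo $4$ (and modulo $8$), squares are very restricted: the pair (real part, $\sqrt{-2}$-part) can only take the values $(0,0)$, $(1,0)$, $(2,0)$, $(3,2)$ and so on, which I would tabulate. Modulo $3$, the squares of $\Z[\sqrt{-2}]/(3)\cong \mathbb{F}_3\times\mathbb{F}_3$ (since $3=(1+\sqrt{-2})(1-\sqrt{-2})$) are also easy to list. Here $z\equiv 5+6\sqrt{-2}$ has odd real part and $\sqrt{-2}$-coefficient $\equiv 2\pmod 4$. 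Modulo $3$, $z\equiv 2\equiv -1$.

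Next, for each $s$ in the list, I would write $x=x_1+x_2\sqrt{-2}$ and impose that $sx+z$ is a square modulo $8$ (and modulo $3$ where needed), solving for the allowed classes of $(x_1,x_2)$.
\begin{itemize}
\item For $s=\pm1$, the condition says that $\pm x+z$ is a square, so $x\equiv \pm(\square - z)$, which pins down $x$ in a small set of classes.
\item For $s=\pm4$, the $\sqrt{-2}$-coefficient of $sx+z$ is $\equiv 2\pmod 4$, and the parity constraints essentially force $sx+z\equiv$ a specific class. The obstruction may then come from combining with modulo $3$, since $4\equiv 1 \pmod 3$.
\item For $s=\pm3$, I would work modulo $3$. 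There $sx+z\equiv z\equiv -1$, and $-1$ must be a square in $\Z[\sqrt{-2}]/(3)\cong\mathbb{F}_3^2$, which it is not. So $s=\pm3$ can lie in no $D(z)$-pair at all, which is even stronger than needed.
\item For the norm-$3$ and norm-$6$, norm-$9$ elements $\pm1\pm\sqrt{-2}$, $\pm2\pm\sqrt{-2}$, $\pm1\pm2\sqrt{-2}$, I would similarly reduce modulo the prime $1\pm\sqrt{-2}$ dividing $s$. Then $sx+z\equiv z\equiv -1\pmod{1\pm\sqrt{-2}}$, and $-1$ is a nonsquare in $\mathbb{F}_3$. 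So these $s$ are excluded outright.
\end{itemize}
So for all $s$ with norm divisible by $3$ the statement is immediate.

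The remaining cases $s\in\{\pm1,\pm4\}$ form the main obstacle, because modulo $3$ one has $sx+z\equiv \pm x-1$, which imposes no contradiction by itself. I would combine the constraints modulo $3$ and modulo $8$ on $t,u,v$: each of $t,u,v$ lies in the allowed set of classes $S_s$. I would then check by a finite computation that for any two (not necessarily distinct) classes $t,u\in S_s$, the element $tu+z$ is never a square modulo $24$ (or $48$). If some pairs survive, I would instead use all three pairs among $t,u,v$, and a parity analysis of $\sqrt{-2}$-coefficients modulo $8$ should force two of them into a bad pair. I expect the case $s=\pm4$ to need the most care, possibly requiring the modulus $16$.
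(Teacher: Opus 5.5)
The paper quotes this result from \cite{Soldo2013} without proof, so your argument can only be judged on its own. Your treatment of $\pm3$, $\pm1\pm\omega$, $\pm2\pm\omega$ and $\pm1\pm2\omega$ is correct and complete. Each of these is divisible by $1+\omega$ or $1-\omega$, $z\equiv-1\pmod 3$, and $-1$ is a non-square in $\mathbb{F}_3$, so these $s$ lie in no $D(z)$-pair at all.

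The gap is in the cases $s=\pm1,\pm4$. You rightly call these the real content, but you leave them as an unperformed computation, and the mechanism you put first cannot succeed for $s=\pm1$.
\begin{itemize}
\item Modulo $4$ the squares are $0,1,2,3+2\omega$ and $z\equiv 1+2\omega$. So the admissible classes of $x$ with $\pm x+z$ a square are the four classes $2,\,2\omega,\,1+2\omega,\,3+2\omega$, not at most two, and modulo $8,16,24,48$ there are even more.
\item The check that $tu+z$ is a non-square for every pair of admissible classes must fail. For generic $r$, the regular set $\{1,\,r^2-z,\,1+r^2-z+2r\}$ is a $D(z)$-triple.
\item So only your fallback (using all three products $tu,tv,uv$) can work, and you leave it at ``should force''.
\item Working modulo $3$ is useless for $s=\pm1,\pm4$, as you note yourself for $\pm1$. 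Also, $s=\pm4$ is not the hard case.
\end{itemize}

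What is missing is the following argument modulo $4$, which needs neither the prime $3$ nor the modulus $16$.

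For $s=\pm4$: the element $\pm4x+z$ has real part $\equiv1\pmod4$ and $\omega$-coefficient $\equiv2\pmod4$. A square $(p+q\omega)^2=p^2-2q^2+2pq\omega$ with odd real part and $\omega$-coefficient $\equiv2\pmod4$ needs $p,q$ both odd, and then its real part is $\equiv3\pmod4$. So $\pm4$ is in no $D(z)$-pair.

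For $s=\pm1$: each of $t,u,v$ lies in one of the four classes above, so two of them, say $t,u$, have real parts of the same parity.
\begin{itemize}
\item If both real parts are even, then $t\equiv u\equiv0\pmod2$, and $tu+z\equiv1+2\omega\pmod4$.
\item If both are odd, then $t,u\in\{1+2\omega,3+2\omega\}$ modulo $4$. Hence $tu\equiv1$ or $3$, and $tu+z\equiv2+2\omega$ or $2\omega\pmod4$.
\end{itemize}
None of these residues is a square modulo $4$. Equivalently, the only compatible pairs of classes are $\{2,3+2\omega\}$, $\{3+2\omega,2\omega\}$, $\{2\omega,1+2\omega\}$ and $\{1+2\omega,2\}$. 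These form a $4$-cycle, which has no triangle.

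This is exactly the bookkeeping modulo $4$ that the paper uses in Section~\ref{sec:individ} for $z=1\pm2\omega$, which also satisfies $z\equiv1+2\omega\pmod4$. With it, your pigeonhole framing goes through: the two ``classes'' are the residues of $x$ modulo $2$, and the contradiction is drawn modulo $4$.
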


\begin{proposition}[{\cite[Proposition~2]{Soldo2013}}]
Let $z=24a+2+(12b+6)\sqrt{-2}$, where $a,b\in \Z$. If
\[
s \in \{\pm 1, \pm 3, \pm 4,\\ \pm 1 \pm \sqrt{-2}, \pm 2\pm \sqrt{-2}, \pm 1\pm 2\sqrt{-2}\}, 
\]
then there does not exist a $D(z)$-quadruple of the form $\{s,t,u,v\}$ in $\Z[\sqrt{-2}]$.

\end{proposition}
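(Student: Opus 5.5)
We prove the statement for $z=24a+2+(12b+6)\sqrt{-2}$ by contradiction, using congruences in $\Z[\sqrt{-2}]$. The approach mirrors the one for $z=24a+5+(12b+6)\sqrt{-2}$. Suppose $\{s,t,u,v\}$ is a $D(z)$-quadruple with $s$ in the listed set. Then $st+z$, $su+z$ and $sv+z$ are squares, and so are $tu+z$, $tv+z$ and $uv+z$. The idea is to pick a small modulus $m$, depending on $s$, that is built from the primes above $2$ and $3$ (namely $\sqrt{-2}$, $1+\sqrt{-2}$ and $1-\sqrt{-2}$). The constraint $x s + z \equiv \square \pmod m$ then restricts each of $t,u,v$ to a small set of residue classes mod $m$. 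Finally we check that no three such classes can have all three pairwise products plus $z$ being squares mod $m$.

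The key steps are as follows.

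\textbf{Step 1.} We record the relevant facts about $z$. We have $z\equiv 2 \pmod{\sqrt{-2}^{2}}$ up to the $\sqrt{-2}$-part, more precisely $z\equiv 2+6\sqrt{-2} \pmod{\gcd(24,12)\text{-adic data}}$. So $z$ is exactly divisible by $\sqrt{-2}$, and we compute $z \bmod 8$ and $z \bmod 4\sqrt{-2}$. Modulo the primes $1\pm\sqrt{-2}$ (norm $3$) we have $z\equiv 2\equiv -1$, since $6\sqrt{-2}\equiv 0$ because $3\mid 6$. Hence $z\equiv -1 \pmod 3$.

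\textbf{Step 2.} We list the squares modulo $3$ and modulo $8$ (or $4\sqrt{-2}$) in $\Z[\sqrt{-2}]$. The ring $\Z[\sqrt{-2}]/3\cong \mathbb{F}_3\times\mathbb{F}_3$, so squares are pairs with entries in $\{0,1\}$. The $2$-adic squares are computed by a short table.

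\textbf{Step 3.} For each $s$ we derive the allowed residues of $t,u,v$.

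For $s=\pm1$ and $s=\pm4$, $s$ is a unit mod $3$. Modulo each prime above $3$, we need $sx-1\in\{0,1\}$, so $x \bmod (1\pm\sqrt{-2})$ is forced into the set $\{s^{-1}, -s^{-1}\}$. Pairwise, $xy-1\in\{0,1\}$ requires $xy\in\{1,2\}$. Among $\{\pm s^{-1}\}$, the products $xy=s^{-2}=1$ are fine, so the modulus $3$ alone may not suffice. We then add the $2$-adic condition: $z$ is divisible exactly by $\sqrt{-2}$ to the first power, and squares have even $\sqrt{-2}$-valuation. Hence $v_{\sqrt{-2}}(xy)\neq 1$, so $xy+z$ is not $\equiv z$ modulo high powers. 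Combining the $2$-adic and $3$-adic restrictions via the Chinese remainder theorem should rule out three mutually compatible $t,u,v$.

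For $s=\pm3$, we have $s\equiv 0 \pmod 3$. Then $st+z\equiv -1$ modulo both primes above $3$, which is a nonsquare in $\mathbb{F}_3$. This gives an immediate contradiction.

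For $s=\pm1\pm\sqrt{-2}$ and $s=\pm1\pm2\sqrt{-2}$, $s$ is divisible by one prime $\pi$ above $3$, giving $st+z\equiv -1 \pmod{\pi}$, a nonsquare. This is again an immediate contradiction. The same holds for $\pm2\pm\sqrt{-2}$ whenever its norm $6$ involves a prime above $3$, which it does; here $2\pm\sqrt{-2}=\sqrt{-2}(\mp\ldots)$ up to a unit factor of norm $3$. So only $s=\pm1$ and $s=\pm4$ need real work.

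\textbf{Step 4.} For $s\in\{\pm1,\pm4\}$ we carry out the finite residue check modulo $3\cdot 8$ (or $3\cdot 16$ if needed). We enumerate the admissible classes for $t$ and check that no triple of them is pairwise compatible. This is a finite computation, and it can be shortened by parity and valuation arguments: $xy+z$ must have even $\sqrt{-2}$-valuation, while $v(z)=1$. This forces $v(xy)=1$ with a cancellation, or else a contradiction arises.

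The main obstacle is Step 4, for $s=\pm1,\pm4$. The $3$-adic condition allows $x\equiv \pm s^{-1}$ at each prime, so the contradiction must come from the interplay with the $2$-adic constraint, and this needs the right modulus. I would first try modulo $8$ combined with $1\pm\sqrt{-2}$. If that is insufficient, I would raise the $\sqrt{-2}$-power and, following the corresponding argument for $24a+5+(12b+6)\sqrt{-2}$, handle the cases by a short table.
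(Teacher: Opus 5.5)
Your treatment of the elements of norm $3$, $6$ or $9$ is correct. We have $z\equiv -1$ modulo both $1+\w$ and $1-\w$, and each of $\pm3$, $\pm1\pm\w$, $\pm2\pm\w$, $\pm1\pm2\w$ is divisible by one of these primes. Since $-1$ is not a square in $\mathbb{F}_3$, such an $s$ lies in no $D(z)$-pair at all.

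The gap is the remaining case $s\in\{\pm1,\pm4\}$. You announce it as a finite check but never carry it out. Moreover, the $2$-adic fact your sketch rests on is false: $z=2\bigl(12a+1+(6b+3)\w\bigr)$ with the second factor coprime to $\w$. So $z$ is divisible exactly by $\w^2=-2$, not by $\w$ to the first power. With $v_{\w}(z)=2$ the parity-of-valuation mechanism you describe gives no contradiction; for two odd elements $x,y$, $xy+z$ is simply a unit. Nothing in the proposal says which residues of $t,u,v$ survive or why no three are compatible.

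The paper only quotes this proposition without proof, so here is what a complete argument needs.

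\textbf{Step 1.} We have $z\equiv 2+2\w\pmod 4$, and this is not among the squares $0,1,2,3+2\w$ modulo $4$. Hence $\pm4t+z$ is never a square. More generally, no two elements of a $D(z)$-pair can both be divisible by $2$.

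\textbf{Step 2.} Let $s=\pm1$ and take $x=m+n\w\in\{t,u,v\}$ with $m$ even and $n$ odd. Then $sx+z$ has odd $\w$-coefficient. But every square $(p+q\w)^2=p^2-2q^2+2pq\w$ has even $\w$-coefficient. So each of $t,u,v$ is either odd (coprime to $\w$) or divisible by $2$.

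\textbf{Step 3.} Two odd elements cannot be separated modulo $4$: for $s=1$, the choice $t\equiv u\equiv v\equiv 3+2\w\pmod 4$ passes all six conditions. So you must work modulo $\w^5=4\w$, i.e.\ with $p\bmod 8$ and $q\bmod 4$. The squares of odd elements modulo $4\w$ are exactly $1$ and $7+2\w$, and $z\equiv 2+2\w\pmod{4\w}$. Hence an odd $x$ with $sx+z$ a square satisfies $sx\equiv 5$ or $7+2\w\pmod{4\w}$. For two such $x,y$ we get $xy=(sx)(sy)\equiv 1$ or $3+2\w$. Therefore $xy+z\equiv 3+2\w$ or $5\pmod{4\w}$, and neither is a square.

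\textbf{Conclusion.} At most one of $t,u,v$ is odd and at most one is divisible by $2$, which is impossible for three elements. Your fallback modulus $8$ would in fact suffice, since it refines $4\w$. However, the factor $3$ contributes nothing for $s=\pm1,\pm4$, and this argument has to be supplied, not just anticipated.
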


The aim of the present paper is to further investigate the exceptional congruence classes from Propositions~\ref{prop3_old}, \ref{prop4_old} and \ref{prop5_old}.  This problem is also listed as open Problem~1.8 in \cite{Duje_Open}, which was intended as an addition to the book~\cite{DujeDioph_book}. By \cite[Theorem 1]{DujeFran}, each of $z$'s can be represented as a difference of two squares of elements in $\mathbb{Z}[\sqrt{-2}]$. In view of the analogous results 
known for certain quadratic fields (see, for example, \cite{DujeGlas,Fran2,Fran3,FranSoldo}), it is natural to expect that explicit examples should also exist for 
the remaining unsolved congruence classes. We first reconsider the family
$
 z=24a+5+(12b+6)\sqrt{-2},
$
which appears in Proposition~\ref{prop3_old}. Next we deal with the family
$
 z=24a+2+(12b+6)\sqrt{-2},
$
from Proposition~\ref{prop4_old}. Once this case is finished, the remaining family
$
 z=48a+44+(24b+12)\sqrt{-2}
$
from Proposition~\ref{prop5_old} follows immediately from a scaling argument via Lemma~\ref{lem:scaling} from the next section. Although the constructions obtained in the present paper yield $D(z)$-quadruples of the same general form as those in \cite{Soldo2013}, the viewpoint adopted here is slightly  different. Namely, there the search was organized by fixing a divisor $e\mid 3z$ and a small element $u$, while here it is more efficient to fix $e$ and a small element $v$ and then solve for the remaining parameters.

The final outcome of the present completion is summarized in the following theorem, which is the main result of this paper.

\begin{theorem}\label{thm:main}
Let $z\in \Z[\sqrt{-2}]$. In each exceptional congruence class appearing in Propositions~\ref{prop3_old}, \ref{prop4_old} and \ref{prop5_old}, there are infinitely many elements $z$ admitting
a $D(z)$-quadruple in~$\mathbb Z[\sqrt{-2}]$.
\end{theorem}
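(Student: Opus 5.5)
The plan is to give, for each exceptional class, an explicit polynomial family of quadruples: parametrize $z$ linearly by an integer parameter inside the residue class, and exhibit four elements of $\Z[\sqrt{-2}]$, depending polynomially on that parameter, whose pairwise products plus $z$ are visibly squares. Following the approach announced in the introduction, I would work with the regular extension method. I start with a $D(z)$-pair $\{a_1,a_2\}$ with $a_1a_2+z=r^2$ and extend it to the regular triple $\{a_1,a_2,a_1+a_2+2r\}$. I then look for a fourth element $d$ such that $\{a_1,a_2,d\}$ is also regular in a second way, so that the quadruple contains two regular triples (twice semi-regular).

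Concretely, I would use the shape from \cite{Soldo2013}. Fix a divisor $e\mid 3z$ and write the elements as linear expressions in auxiliary parameters $u,v$, chosen so that the products plus $z$ are perfect squares identically. The difference from \cite{Soldo2013} is that I fix $e$ and a small $v\in\Z[\sqrt{-2}]$, say of norm $1,2,3,6,9,11,\dots$, rather than $u$. Once $e$ and $v$ are fixed, the square conditions reduce to linear congruence conditions on the remaining parameter. Solving them gives $z=z_0+Mk$, with $M\in\Z[\sqrt{-2}]$ a modulus and $k\in\Z[\sqrt{-2}]$ free, and this lands in a prescribed residue class $(a,b)\bmod 11$ of the family $24a+5+(12b+6)\sqrt{-2}$. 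For each of the 36 exceptional pairs $(a',b')$ in Proposition~\ref{prop3_old}, I would find by computer search a choice $(e,v)$ whose family meets that class. Such a family then yields infinitely many $z$ in the class: restrict $k$ to a sublattice so that $z\equiv 24a'+5+(12b'+6)\sqrt{-2}$ modulo $264$ in the appropriate sense.

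I would repeat this for the 36 classes of Proposition~\ref{prop4_old}. For Proposition~\ref{prop5_old}, I would use a scaling argument: if $\{a_i\}$ is a $D(z)$-quadruple, then $\{2a_i\}$ is a $D(4z)$-quadruple, and similarly with the factor $\sqrt{-2}$, which multiplies $z$ by $-2$. Writing $48a+44+(24b+12)\sqrt{-2}=-2\cdot w$ for suitable $w$ of the second form should give $w=-(12b+6)\sqrt{-2}\cdot\frac{1}{\dots}$. I would check directly which multiplication, by $-2$ or by $4$, sends the family $24a+2+(12b+6)\sqrt{-2}$ onto $48a+44+(24b+12)\sqrt{-2}$. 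I would also track how residues mod $11$ transform; since $11$ is coprime to $2$, this is a bijection of classes. The exceptional classes of Proposition~\ref{prop5_old} should then be exactly the images of those of Proposition~\ref{prop4_old}, which I would verify. The families must also avoid degeneracies: all elements nonzero and distinct. This holds for all but finitely many $k$, which preserves infiniteness.

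The main obstacle is the search step. For each of the 72 residue classes I need an $(e,v)$ family that reaches that class. By Propositions 1 and 2 of \cite{Soldo2013}, the elements of small norm are excluded, so $v$ and the entries must be somewhat larger. If a class resists, I would first enlarge the range of $v$ and allow $e$ to contain the prime factors of norm $3$ and $11$, because divisibility by primes above $11$ is what distinguishes the classes mod $11$.
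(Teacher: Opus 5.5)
\textbf{Verdict: genuine gap.} Your strategy is the paper's: the regular construction $\{u,v,u+v+2r,u+4v+4r\}$ with a divisor $e\mid 3z$ and a small $v$ fixed, reduction modulo $11$, and scaling for the third family. For that last step the right factor is $\xi=\w$: $-2\bigl(24a+2+(12b+6)\w\bigr)=48(-a-1)+44+\bigl(24(-b-1)+12\bigr)\w$, and $(a,b)\mapsto(-a-1,-b-1)$ maps the exceptional list of Proposition~\ref{prop4_old} exactly onto that of Proposition~\ref{prop5_old}. The factor $\xi=2$ fails, since $4(24a+2)\equiv 8\pmod{48}$.

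The problem is that the proposal stops where the proof begins. The whole content of the theorem is that each of the $72$ classes is actually reached by some construction, and you leave exactly this to a search you have not carried out; nothing in the plan guarantees it succeeds. The paper's proof consists of precisely these data: some thirty-five substitutions $x=\alpha s+\beta$, $y=\gamma s+\delta$ for $r=x+y\w$ and various $(e,v)$, each covering a few classes for $s$ in prescribed residue classes mod $11$. Your infinitude step (restricting the parameter to a residue class) is fine and is what the paper does.

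Moreover, the mechanism you describe is wrong for the fixed-$v$ method, and the error hides the difficulty. With $e,v$ fixed, \eqref{eq:u-from-r} reads $(2e+3v)u=(3r-e)(r-e)$. So $u$ and $z=r^2-uv$ are quadratic in the free parameter $r$; the $r^2$-coefficient of $z$ is $2e/(2e+3v)\neq 0$. Hence the admissible $z$ do not fill a coset $z_0+Mk$, and there is no a priori reason a given $(e,v)$ reaches a prescribed class mod $11$. This is why the paper needs many pairs $(e,v)$, and why for the class $(2,5)$ of the second family it switches to fixing $u=1+3\w$, $e=2+\w$.

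Your lattice picture is in fact correct for that fixed-$u$ variant, and it gives a way to close the gap:
\begin{itemize}
\item With $e,u$ fixed, $r=(e+3z/e-2u)/4$ is affine in $(a,b)$, with integral coefficients in the cases below. So whether $v=(r^2-z)/u$ lies in $\Z[\w]$ depends only on $(a,b)\bmod N(u)$.
\item Since $N(u)=19$ is prime to $11$, one admissible class mod $19$ together with the Chinese remainder theorem reaches every class mod $11$.
\item Second family: the paper's admissible class $(a,b)\equiv(13,5)\pmod{19}$ already settles the whole family, and by scaling the third family too.
\item First family: $e=1+\w$, $u=1+3\w$ give $r=6a+6b+4+(3b-6a-1)\w$, and $v\in\Z[\w]$ if and only if $8a+5b\equiv 9$ or $17\pmod{19}$. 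For example, $(a,b)=(0,17)$, which lies in the exceptional class $(0,6)$, gives $v=3609-437\w$.
\end{itemize}
To turn the plan into a proof you need either the explicit families or this argument, together with your remark that degenerate quadruples occur for only finitely many parameter values.
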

The proof rests on the partial existence results from Propositions~\ref{prop3_old}, \ref{prop4_old} and \ref{prop5_old}, 
the scaling argument from Lemma~\ref{lem:scaling}, and the new propositions established in Section~\ref{sec:new} of this paper. Together, these results provide 
infinite families of elements $z$ admitting  $D(z)$-quadruples in all exceptional congruence classes occurring in Propositions~\ref{prop3_old},
\ref{prop4_old} and \ref{prop5_old}. 

In Section~\ref{sec:comput}, we consider the range of applicability
of the construction method used in this paper by means of a computational
search in the exceptional congruence classes.  The search records
individual values $z$ for which this method did not produce a
$D(z)$-quadruple.  Moreover, for one representative value $z$, we prove
that no proper twice semi-regular $D(z)$-quadruple arises from this
construction.  This illustrates a limitation of the twice semi-regular
approach for certain individual values of $z$.

Finally, in Section~\ref{sec:individ}, we discuss the individual exceptional values
$z\in\{-1,1\pm2\omega\}$ from Theorem~1.1.  We give examples of
$D(z)$-triples for $z=1\pm2\omega$ and prove a non-extendibility result
for infinite family of such triples.


\section{Preliminaries}\label{sec:prelim}

In this section, we recall the method from \cite[Section 2]{DujellaSoldo2010} that will be used to construct new $D(z)$-quadruples in the exceptional cases. 
We also fix the notation and collect the auxiliary facts needed in the proofs of the main results.

Throughout the paper, for simplicity, we write
\[
\w=\sqrt{-2}, \quad \Z[\w]=\Z[\sqrt{-2}],
\]
and for $x=p+q\w\in \Z[\w]$ we denote by
\[
\N(x)=x\overline{x}=p^2+2q^2
\]
its norm.
As we mentioned, we shall repeatedly use the following elementary scaling lemma:

\begin{lemma}[{\cite[Lemma~1]{Soldo2013}}]\label{lem:scaling}
Let $\{a_1,a_2,a_3,a_4\}\subset \Z[\w]$ be a $D(z)$-quadruple and let $\xi\in \Z[\w]$. Then the set
\[
\{a_1\xi,a_2\xi,a_3\xi,a_4\xi\}
\]
has the property $D(z\xi^2)$ in $\Z[\w]$, while the set 
\[
\{\bar{a_1}, \bar{a_2}, \bar{a_3}, \bar{a_4}\}
\]
has the property $D(\bar{z})$ in $\Z[\w]$.
\end{lemma}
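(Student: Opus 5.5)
The statement to prove is Lemma~\ref{lem:scaling}, and it is a direct verification from the definition of a $D(z)$-quadruple. The plan is to check the two assertions separately, using only that $\Z[\w]$ is an integral domain closed under complex conjugation.

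\textbf{Scaling.} Assume $a_ia_j+z=x_{ij}^2$ with $x_{ij}\in\Z[\w]$ for all $1\le i<j\le 4$. Then
\[
(a_i\xi)(a_j\xi)+z\xi^2=\xi^2(a_ia_j+z)=(\xi x_{ij})^2,
\]
and $\xi x_{ij}\in\Z[\w]$. For the elements to be nonzero we need $\xi\neq 0$, which is implicit, since for $\xi=0$ the set collapses and is no longer a quadruple of nonzero elements. Because $\Z[\w]$ has no zero divisors, $a_i\neq 0$ gives $a_i\xi\neq0$. Distinctness is preserved because $a_i\xi=a_j\xi$ with $\xi\ne0$ forces $a_i=a_j$. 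Hence $\{a_1\xi,\dots,a_4\xi\}$ is a $D(z\xi^2)$-quadruple.

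\textbf{Conjugation.} The map $p+q\w\mapsto p-q\w$ is a ring automorphism of $\Z[\w]$, because $\overline{\w}=-\w$ satisfies the same minimal polynomial $X^2+2$. Applying it to $a_ia_j+z=x_{ij}^2$ gives
\[
\bar a_i\bar a_j+\bar z=\bar x_{ij}^{\,2}.
\]
Bijectivity of conjugation guarantees that the $\bar a_i$ are nonzero and pairwise distinct. Therefore $\{\bar a_1,\dots,\bar a_4\}$ is a $D(\bar z)$-quadruple.

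There is no real obstacle here. The only point needing care is the implicit hypothesis $\xi\neq0$, which I would state explicitly.
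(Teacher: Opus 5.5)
Your proof is correct: multiplying each relation $a_ia_j+z=x_{ij}^2$ by $\xi^2$ and applying the conjugation automorphism of $\Z[\w]$ to it is the standard verification. The paper gives no proof of its own (it quotes the lemma from \cite{Soldo2013}), and you were right to state the implicit hypothesis $\xi\neq 0$ explicitly, since it is needed for the scaled elements to be nonzero and pairwise distinct.
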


The method used in this paper is based on the standard regular extension procedure  and we shall refer to it as the regular construction. Suppose that $u,v,r\in \Z[\w]$ satisfy
\begin{equation}\label{eq:uvplusz}
uv+z=r^2.
\end{equation}
Then the set $\{u,v,u+v+2r\}$ is a regular $D(z)$-triple, because
\[u(u+v+2r)+z=(u+r)^2, \quad v(u+v+2r)+z=(v+r)^2.
\]
Applying the same idea once more, one finds that
\begin{equation}\label{eq:regular-quadruple}
\{u,v,u+v+2r,u+4v+4r\}
\end{equation}
is a $D(z)$-quadruple if and only if there exists $y\in \Z[\w]$ such that
\begin{equation}\label{eq:last-square}
u(u+4v+4r)+z=y^2.
\end{equation}
A straightforward computation transforms \eqref{eq:last-square} into
\[
3z=(u+2r-y)(u+2r+y).
\]
Hence, setting
\[
e=u+2r-y, \quad f=u+2r+y,
\]
we obtain
\begin{equation}\label{eq:ef}
ef=3z, \quad 2u+4r=e+f.
\end{equation}
Conversely, if $e,f\in \Z[\w]$ satisfy \eqref{eq:ef} and \eqref{eq:uvplusz}, then \eqref{eq:regular-quadruple} is a  $D(z)$-quadruple.

In \cite{Soldo2013}, the search was organized by fixing a divisor $e\mid 3z$, writing $f=3z/e$, and then looking for a small element $u$ such that
\begin{equation}\label{eq:r-from-eu}
r=\frac{e+f-2u}{4}
\end{equation}
belongs to $\Z[\w]$ and
\begin{equation}\label{eq:v-from-r-u}
v=\frac{r^2-z}{u}
\end{equation}
also lies in $\Z[\w]$. This yields the quadruple \eqref{eq:regular-quadruple} whenever the above divisibility conditions are satisfied.

For the present paper it is more convenient to reverse the point of view. We again fix a divisor $e\mid 3z$, but this time we also fix a small element $v\in \Z[\w]$ and regard $r$ as the main variable. Starting from \eqref{eq:uvplusz} and substituting $z=r^2-uv$ into the relation $2u+4r=e+3z/e$, we obtain
\[
2eu+4er=e^2+3r^2-3uv.
\]
After rearranging, this becomes
\begin{equation}\label{eq:u-from-r}
(2e+3v)u=3r^2-4er+e^2.
\end{equation}
Therefore, once $e$ and $v$ are fixed, any element $r\in \Z[\w]$ for which the right-hand side of \eqref{eq:u-from-r} is divisible by $2e+3v$ produces an element $u\in \Z[\w]$. The corresponding parameter is then recovered from
\begin{equation}\label{eq:z-from-r}
z=r^2-uv.
\end{equation}
By construction, the resulting set \eqref{eq:regular-quadruple} is a $D(z)$-quadruple. We shall refer to such a quadruple as a twice semi-regular $D(z)$-quadruple, because it contains two regular $D(z)$-triples, i.e., $\{u,v,u+v+2r\}$ and $\{v,u+v+2r, u+4v+4r\}$.

In this paper, this reformulation is especially useful when $e$ is chosen among small divisors of $3$ and $6$, and $v$ is taken from a short list of elements of small norm. Writing
\[
r=x+y\w, \quad x,y\in \Z,
\]
one may expand \eqref{eq:u-from-r} and \eqref{eq:z-from-r} explicitly and read off the corresponding values of $a$ and $b$ in the target families
\[
z=24a+5+(12b+6)\w, \quad z=24a+2+(12b+6)\w.
\]
Imposing the required divisibility conditions yields an explicit expression for $u$ in terms of $x$ and $y$, and hence a corresponding value of $z=r^2-uv$. 
Requiring this $z$ to be of one of the target forms leads to polynomial parametrizations $a=a(x,y)$ and $b=b(x,y)$ with $a,b\in\Z$, from which one obtains infinite subfamilies in prescribed residue classes modulo $11$.


For one residual congruence class in Proposition~\ref{prop4_old}, namely $(a,b)\equiv (2,5)\pmod{11}$, it is more efficient to return to the norm $19$ strategy already implicit in the earlier computations from \cite{DujellaSoldo2010, Soldo2013}. One fixes a small element $u$ with $\N(u)=19$ and then applies \eqref{eq:r-from-eu} and \eqref{eq:v-from-r-u} directly. This complementary family completes the treatment of the last unsolved class.

\section{New $D(z)$-quadruples for exceptional cases}\label{sec:new}
This section is devoted to constructing infinite families of twice semi-regular $D(z)$-quadruples
in the exceptional congruence classes from
Propositions~\ref{prop3_old}, \ref{prop4_old} and \ref{prop5_old}.
After establishing these results, Theorem~\ref{thm:main} follows
immediately. We use the method described in the previous section and
begin with the exceptional congruence classes appearing in Proposition~\ref{prop3_old}.
\begin{proposition} \label{prop3_new}

For each exceptional residue class 
$(a',b') \pmod {11}$ appearing in Proposition~\ref{prop3_old}, there exist
infinitely many pairs $a,b\in\mathbb Z$ with $a\equiv a' \pmod {11}$, $b\equiv b' \pmod {11}$,
such that, for $z=24a+5+(12b+6)\w$, there exists a $D(z)$-quadruple in $\Z[\w]$.
\end{proposition}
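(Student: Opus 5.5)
We use the twice semi-regular construction of Section~\ref{sec:prelim}. The plan is to fix a divisor $e$ of $3$ or $6$ (for instance $e\in\{\pm1,\pm3,\pm\w,\pm3\w,1\pm\w,\dots\}$) and a small element $v$ (norm $\le 22$, with $\N(v)$ avoiding the forbidden small elements of \cite[Proposition~1]{Soldo2013}). We then take $r=x+y\w$ as the free variable.

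Equation \eqref{eq:u-from-r} gives $u=(3r^2-4er+e^2)/(2e+3v)$. Write $d=2e+3v$. The congruence $3r^2-4er+e^2=(3r-e)(r-e)\equiv 0 \pmod d$ is a condition on $(x,y)$ modulo $\N(d)$ (or modulo $d$, up to units). We choose $v$ so that $\N(d)$ is small, typically a prime such as $11$ or $19$, or a product of such a prime with a power of $2$ or $3$. The admissible $(x,y)$ then form a union of lattices: for example $r\equiv e$ or $r\equiv e/3 \pmod d$ when $3\nmid d$. On each lattice we substitute $x=x_0+\N(d)s$ and $y=y_0+\N(d)t$ (or the corresponding lattice basis). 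Then $u$ and $z=r^2-uv$ become explicit polynomials in $s,t$ with coefficients in $\Z[\w]$.

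Next we impose $z=24a+5+(12b+6)\w$. Writing $z=P(s,t)+Q(s,t)\w$, we need $P\equiv 5 \pmod{24}$ and $Q\equiv 6\pmod{12}$. These conditions restrict $(s,t)$ to further residue classes modulo $24$, or modulo a small power of $2$ times $3$. On those classes, $a=(P-5)/24$ and $b=(Q-6)/12$ are integer-valued quadratic polynomials in the remaining parameters. Finally, we reduce $a(s,t)$ and $b(s,t)$ modulo $11$ and check which pairs $(a',b')$ are attained. We also check that $u,v,u+v+2r,u+4v+4r$ are nonzero and distinct, which fails only on a proper subvariety. Hence each attained class is hit by infinitely many $(s,t)$, and since $a,b$ are nonconstant polynomials, by infinitely many distinct $z$.

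The proof is then a table. For each of the $36$ exceptional classes in Proposition~\ref{prop3_old} we list one pair $(e,v)$, the lattice for $r$, and a witnessing choice of residues of $(s,t)$ modulo $11$. If $11\nmid \N(d)$, the map $(s,t)\mapsto(a,b)\bmod 11$ is well defined on residues modulo $11$, and a finite computation over $11^2$ residues per family shows coverage. Lemma~\ref{lem:scaling} with conjugation can reduce the work, since it maps the class $(a,b)$ to $(a,-1-b)$, i.e. $b'\mapsto 10-b'$, and the exceptional list is indeed symmetric under $b'\mapsto 10-b'$. So only the classes with $b'\le 5$ need explicit families.

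The main obstacle is finding, for every one of the exceptional classes, some $(e,v)$ whose family actually reaches that class modulo $11$. The quadratic forms $a(s,t),b(s,t)$ modulo $11$ may miss residues, because the earlier norm-$11$/$22$ constructions are exactly what left these classes uncovered. To handle this, I would first run a computer search over small $e\mid 6$ and $v$ with $\N(v)\le 50$, recording the $(a,b)\bmod 11$ images. I would prefer families where $11\mid\N(d)$ is avoided, so that the images are large, typically covering most of $(\Z/11)^2$. For any class still missing, I would fall back on the $\N(u)=19$ variant via \eqref{eq:r-from-eu} and \eqref{eq:v-from-r-u}, as described at the end of Section~\ref{sec:prelim}.
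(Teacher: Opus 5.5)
Your framework is the paper's own: fix $e$ and $v$, solve $(2e+3v)u=3r^2-4er+e^2$, set $z=r^2-uv$, read off $a,b$ and reduce modulo $11$. Your conjugation reduction $(a,b)\mapsto(a,-1-b)$ is also correct. But the proposal stops exactly where the proof has to start. The proposition is an existence claim, and its entire content is this: for each of the $36$ classes, some concrete $(e,v)$ and admissible $r$ give $z=24a+5+(12b+6)\w$ with $(a,b)$ in that class. You never exhibit one family or check one residue, and you yourself call this ``the main obstacle''. A search that has not been carried out, with no argument that it must succeed, does not prove the statement. The paper supplies exactly this content. It takes $e=1\pm\w$ and $v\in\{\pm2\w,\,-3\mp\w,\,3\pm5\w,\,-3\mp7\w,\,-9\mp7\w\}$, chooses explicit $x,y$ linear in $s$, and lists for each class the residue of $s$ modulo $11$ that reaches it.

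The reason you give for expecting the search to succeed is also wrong. Eliminating $u$ gives $z=e(2r^2+4vr-ev)/(2e+3v)$. So if $11\nmid\N(2e+3v)$, then $z\bmod 11$ is an affine function of $(r+v)^2$. Because $11=(3+\w)(3-\w)$ splits, such a family reaches at most $6\cdot 6=36$ of the $121$ classes $(a,b)\bmod 11$, not ``most of $(\Z/11)^2$''. The exceptional classes are exactly those with $-z$ a square modulo $11$, and such a family meets them only in a small product set. For $e=1-\w$, $v=2\w$ this is precisely the $9$ classes the paper obtains. It is rather the families with $11\mid\N(2e+3v)$ that sweep more. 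For $v=-3-\w$, where $\N(2e+3v)=99$, the paper's choice $x=-66s-26$, $y=66s+28$ gives $(a,b)\equiv(9s+8,\,6s+1)\pmod{11}$, a whole line. So your stated preference points the wrong way, and covering all $36$ classes requires the explicit bookkeeping you omitted. One minor point: $e$ divisible by $\w$ is useless here, since $3z/e=2u+4r-e\in\Z[\w]$ would force $\N(z)$ to be even.
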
 

\proof 
In each of the exceptional residue classes from
Proposition~\ref{prop3_old} we construct an infinite family of $z$'s for which a $D(z)$-quadruple exists.

Let  $z = 24a+5 + (12b+6) \w$.  First, set $e=1-\w$ and $v = 2 \w$. Then  
\begin{align*}
u&=\frac{1}{6} (x^2-2 y (4+y)+x (4+8 y)-2 \w (x+y) (x-2 (1+y))-3),\\
a&=\frac{1}{72} (-x^2+4 x (2+y)+2 y (4+y)-15),\\
b&=\frac{1}{36} (-x^2-2 x (2+y)+2 y (4+y)-15).
\end{align*}
Taking $x=-9$, $y=-18s+20$, $s \in \Z$, we obtain  
\begin{align*}
&u= -108 s^2+480 s-393+ (216 s^2-438 s + 187)\w,\\ 
&a=9 s^2-13s+1, \quad b=18 s^2-53 s +35.
\end{align*} 
For this choice we have $r= -9+(-18 s+20)\w$. Hence the corresponding value of $z$ is $z=216 s^2-312 s+29 +(216 s^2-636 s+ 426)\w$ and the associated $D(z)$-quadruple is
\begin{align*}
\{
&-108 s^2+480 s-393+ (216 s^2-438 s + 187)\w, 2 \w, \\
&-108s^{2}+480s-411+(216s^{2}-474s+229)\w,\\
&-108s^{2}+480s-429+(216s^{2}-510s+275)\w 
\}. 
\end{align*}
Reducing modulo $11$, one finds that
$
(a,b)\equiv (5,2), (0,4), (8,4)\pmod{11}
$
for
$
s\equiv 6, 8, 9\pmod{11},
$
respectively. 

If $x=12 s-21, y=-12 s-12, s \in \Z$, we have 
\begin{align*}
&u=-216s^{2}-12s+363+(396s+11)\w,\\ 
&a=-6s^{2}+21s+8, \quad b=12s^{2}+20s-19.
\end{align*}  
Similarly we get associated $D(z)$-quadruple. A reduction modulo $11$ shows that
$
(a,b)\equiv (4,3)$, $(6,5), (4,5)\pmod{11}
$
for
$
s\equiv 2, 6, 7\pmod{11},
$
respectively.

Now let
$
x=-12s-15, y=-6s+18, s\in\Z.
$
Then
\begin{align*}
u&=108s^{2}-36s-465+(-318s+53)\w,\\
a&=3 s^2-20 s-9, \quad b=-6 s^2-15 s+ 32.
\end{align*}
Hence we obtain another $D(z)$-quadruple. Reducing modulo $11$, we get 
$
(a,b)\equiv (1,1), (9,7)\pmod{11}
$
for
$
s\equiv 6, 7\pmod{11},
$
respectively.  

For the choice
$
x=39, y=-18s-10, s\in\Z,
$
the above formulas give
\begin{align*}
u&=-108s^{2}-1032s-261+(216s^{2}-6s-551)\w,\\
a&=9 s^2-31 s-37, \quad b=18 s^2+55 s-22.
\end{align*}
This yields one more $D(z)$-quadruple.  
Hence, modulo $11$, we obtain the residue class
$
(a,b)\equiv (5,8) \pmod{11}, 
$
when
$
s\equiv 8\pmod{11}.
$

If we take $e=1+\w, v = -2 \w$, we get 
\begin{align*}
u&=\frac{1}{6} (x^2+x (4-8 y)-2 (-4+y) y+2 \w (x-y) (-2+x+2 y)-3),\\
a&=\frac{1}{72} (-x^2-4 x (-2+y)+2 (-4+y) y-15),\\
b&=\frac{1}{36} (x^2-2 x (-2+y)-2 (-4+y) y-21).
\end{align*}
Substituting
$
x=12s-33, y=-6s, s\in\Z,
$
into the preceding formulas, we get
\begin{align*}
u&=108s^{2}-396s+159+(385-210s)\w,\\
a&=3 s^2+2 s-19, \quad b=6 s^2-33 s +26.
\end{align*}
For these values we get $D(z)$-quadruple.  
Reducing $a$ and $b$ modulo $11$, we obtain
$
(a,b)\equiv (8,6),\ (0,6)\pmod{11}
$
for
$
s\equiv 2, 9\pmod{11},
$
respectively.

Taking $x=12s-11$, $y=12s+2$, $s \in \Z$, we obtain  
\begin{align*}
&u= -216s^{2}+108s+43+(39-156s)\w,\\ 
&a=-6 s^2+11 s-2, \quad b=-12 s^2+3.
\end{align*} 
That parametrization yields another  $D(z)$-quadruple. 
Reducing modulo $11$, one finds that
$
(a,b)\equiv (9,3), (1,9) \pmod{11}
$
for
$
s\equiv 0, 4\pmod{11},
$
respectively. 

Choosing 
$
x=12s-9, y=12s+6, s\in\Z,
$
the formulas from above give
\begin{align*}
u&=-216s^{2}-12s+75-(180s+5)\w,\\
a&=-6 s^2+9 s+1, \quad b=-12 s^2-8 s+3.
\end{align*}
We get a corresponding $D(z)$-quadruple, and 
modulo $11$, obtain the residue class 
$ 
(a,b)\equiv (4,7) \pmod{11}
$ 
corresponding to
$ 
s\equiv 6\pmod{11}.
$ 

Now, set $e=1-\w$ and $v=-3-\w$. Then 
\begin{align*}
u&=\frac{1}{33} (-7 x^2-4 x (1+5 y)+2 y (16+7 y)+\w (3+5 x^2-2 y (2+5 y)-2 x (8+7 y))+9),\\
a&=\frac{1}{396} (x^2+2 x (5-8 y)-2 (-26+y) y-72),\\
b&=\frac{1}{99} (2 x^2+x (-13+y)+(5-4 y) y-45).
\end{align*} 
If $x=-66 s-26, y=66 s+28, s \in \Z$, we have 
\begin{align*}
&u=3564s^{2}+3072s+661+(1188s^{2}+936s+183)\w,\\ 
&a=165 s^2+141 s +30, \quad b=-132 s^2-104 s-21.
\end{align*}  
This construction provides another $D(z)$-quadruple. 
A reduction modulo $11$ shows that
$
(a,b)\equiv (2,8), (0,3), (9,9), (5,10), (3,5), (10,6)\pmod{11}
$
for
$
s\equiv 3, 4, 5,7, 8,10\pmod{11},
$
respectively. 

Taking $e=1+\w$ and $v=-3+\w$, we obtain 
\begin{align*}
u&=\frac{1}{33} (-7 x^2+4 x (-1+5 y)+2 y (-16+7 y)-\w (3+5 x^2+2 (2-5 y) y+2 x (-8+7 y))+9),\\
a&=\frac{1}{396} (x^2-2 y (26+y)+2 x (5+8 y)-72),\\
b&=\frac{1}{99}  (-2 x^2+x (13+y)+y (5+4 y)-54).
\end{align*} 

Let
$
x=-66s-26, y=-66s-28, s\in\Z.
$
Then
\begin{align*}
u&=3564s^{2}+3072s+661-(1188s^{2}+936s+183)\w,\\
a&=165 s^2+141 s +30, \quad b=132 s^2+104 s +20.
\end{align*}
In this way, we obtain another $D(z)$-quadruple. 
Reducing modulo $11$, we get 
$
(a,b)\equiv (2,2), (0,7),$ $(9,1), (5,0), (10,4)\pmod{11}
$
for
$
s\equiv 3, 4, 5, 7, 10\pmod{11},
$
respectively.   

Now we take $e=1-\w$ and $v=3+5\w$ and obtain 
\begin{align*}
u&=\frac{1}{153}(11 x^2-2 y (32+11 y)+4 x (5+13 y)+\w (-3-13 x^2+20 y+26 y^2+x (32+22 y))-21),\\
a&=\frac{1}{1836}(-5 x^2+2 y (98+5 y)+2 x (65+16 y)-366),\\
b&=\frac{1}{459}(-4 x^2-x (49+5 y)+y (65+8 y)-201).
\end{align*} 
If we choose $x=-96s+18$ and $y=-30s+33$, $s\in\Z$, then 
\begin{align*}
u&=1512s^{2}-1224s+57+(-216s^{2}-600s+251)\w,\\
a&=30 s^2-76 s+ 20, \quad b=-96 s^2+42 s +12.
\end{align*}
Therefore, we have another $D(z)$-quadruple for the corresponding value of $z$. 
Modulo $11$, we get the residue class 
$ 
(a,b)\equiv (7,2) \pmod{11}
$ 
corresponding to
$ 
s\equiv 1\pmod{11}.
$ 

Further, taking $x=-66s-48$ and $y=-78s-45$, $s\in\Z$, yields 
\begin{align*}
u&=1188s^{2}+1752s+621+(1404s^{2}+1596s+443)\w,\\
a&=111 s^2+125 s +34, \quad b=12 s^2-10 s-10.
\end{align*}
Thus the above choice yields another $D(z)$-quadruple in the required family. 
A reduction modulo $11$ shows that
$
(a,b)\equiv (2,9), (6,10)\pmod{11}
$
for
$
s\equiv 5, 6\pmod{11},
$
respectively.

Suppose that $e=1+\w$ and $v=3-5\w$. Then we obtain 
\begin{align*}
u&=\frac{1}{153} (x (20+11 x)+64 y-52 x y-22 y^2+\w (3+x (-32+13 x)+20 y+22 x y-26 y^2)-21),\\
a&=\frac{1}{1836} (-5 x^2+x (130-32 y)+2 y (-98+5 y)-366),\\
b&=\frac{1}{459}(4 x^2+x (49-5 y)+(65-8 y) y-258).
\end{align*} 

For $x=-96s-62$ and $y=30s-25$, $s\in\Z$, it follows 
\begin{align*}
u&=1512s^{2}+888s-359+(216s^{2}+1368s+453)\w,\\
a&=30 s^2-60 s-36, \quad b=96 s^2+118 s-5
\end{align*}
and we get a corresponding $D(z)$-quadruple. A reduction modulo $11$ shows that
$
(a,b)\equiv (2,1), (6,0) \pmod{11}
$
for
$
s\equiv 6, 9\pmod{11},
$
respectively. 

Choosing $x=-96s-54$ and $y=30s-5$, $s\in\Z$, we obtain 
\begin{align*}
u&=1512s^{2}+1176s+105+(216s^{2}+792s+293)\w,\\
a&=30 s^2-20 s-16, \quad b=96 s^2+102 s+15. 
\end{align*} 
Then it is easy to obtain another $D(z)$-quadruple. Modulo $11$, this gives the additional residue class
$
(a,b)\equiv (7,8) \pmod{11}
$
corresponding to
$
s\equiv 3 \pmod{11}.
$

Let  $e=1-\w$ and $v=-3-7\w$. It follows that 
\begin{align*}
u&=\frac{1}{369} (-7 x^2+2 y (40+7 y)-4 x (13+23 y)+\w (-3+23 x^2-2 x (20+7 y)-2 y (26+23 y))+33),\\
a&=\frac{1}{4428} (x (202+13 x)+y(484 -40 x -26 y)-852),\\
b&=\frac{1}{1107} (5 x^2+(101-10 y) y+x (-121+13 y)-498).
\end{align*} 
For the choice
$
x=-120s+76, y=-78s+58, s\in\Z,
$
the above formulas give
\begin{align*}
u&=-2376s^{2}+3216s-1079+(-216s^{2}+504s-243)\w,\\
a&=-78 s^2+102 s-33, \quad b=120 s^2-146 s +44, 
\end{align*}
so we can construct a $D(z)$-quadruple. Reducing modulo $11$, one finds that
$
(a,b)\equiv (7,5)\pmod{11}
$
for
$
s\equiv 4\pmod{11}.
$

Taking $
x=-78s-60, y=60s-96, s\in\Z
$,
we obtain 
\begin{align*}
u&=1188s^{2}-1560s-1167+(108s^{2}+1872s-1123)\w,\\
a&=39 s^2+63 s-109, \quad b=-60 s^2+206 s-2. 
\end{align*}
Therefore, we have a $D(z)$-quadruple and a reduction modulo $11$ shows that 
$
(a,b)\equiv (8,5)\pmod{11}
$
for
$
s\equiv 4\pmod{11}.
$

Now we choose  $e=1-\w$, $v=-9-7\w$, and obtain 
\begin{align*}
u&=\frac{1}{561} (-25 x^2-4 x (7+23 y)+2 y (64+25 y)+\w (9+23 x^2-2 y (14+23 y)-2 x (32+25 y))+39),\\
a&=\frac{1}{6732} (7 x (46+x)+ y(772 -64 x -14 y)-1290),\\
b&=\frac{1}{1683} (-193 x+8 x^2+7 (23+x) y-16 y^2-753).
\end{align*}  
Substituting $x=-150s-30$, $y=-138s+69$, $s \in \Z$, we get 
\begin{align*}
u&=-2700s^{2}-1104s+741+(-2484s^{2}+2508s-169)\w,\\
a&=-213 s^2+85 s+ 17, \quad b=12 s^2+202 s-40, 
\end{align*}
and we can construct one more $D(z)$-quadruple. Modulo $11$, we obtain the residue class 
$
(a,b)\equiv (1,5)\pmod{11}
$
corresponding to
$
s\equiv 5\pmod{11}.
$

\noindent
Similarly, taking $x=-150s-44$, $y=-138s-5$, $s \in \Z$, it follows that 
\begin{align*}
u&=-2700s^{2}-1608s-119+(-2484s^{2}-156s+63)\w,\\
a&=-213 s^2-77 s-3, \quad b=12 s^2+82 s +14. 
\end{align*}
That leads to another $D(z)$-quadruple. Further, additional residue class is $
(a,b)\equiv (10,10)\pmod{11}
$
for 
$
s\equiv 7\pmod{11}.
$

Finally, let us take $e=1+\w$, $v=-9+7\w$. Then 
\begin{align*}
u&=\frac{1}{561} (-25 x^2+4 x (-7+23 y)+2 y (-64+25 y)-\w (9+x (-64+23 x)+28 y+50 x y-46 y^2)+39),\\
a&=\frac{1}{6732} (7 x^2+x (322+64 y)-2 (645+y (386+7 y))),\\
b&=\frac{1}{1683} (193 x-8 x^2+7 (23+x) y+16 y^2-930).
\end{align*}  
For the choice $x=-150s+6$, $y=138s-41$, $s \in \Z$, the above formulas give
\begin{align*}
u&= -2700s^{2}+192s+117+(2484s^{2}-1500s+161)\w,\\
a&=-213 s^2+65 s-1, \quad b=-12 s^2-74 s+11. 
\end{align*} 
We obtain the corresponding $D(z)$-quadruple. Reducing $a$ and $b$ modulo $11$, we get 
$
(a,b)\equiv (10,0)\pmod{11}
$
when
$
s\equiv 0\pmod{11}.
$
\endproof

Next, we consider the exceptional congruence classes from Proposition~\ref{prop4_old}. These are again treated by explicit regular constructions. 
As before, we present the computation in detail for one representative choice of 
parameters, while for the others we record only the corresponding formulas and congruence information.
\begin{proposition} \label{prop4_new}

For each exceptional residue class 
$(a',b') \pmod {11}$ appearing in Proposition~\ref{prop4_old}, there exist
infinitely many pairs $a,b\in\mathbb Z$ with $a\equiv a' \pmod {11}$, $b\equiv b' \pmod {11}$, 
such that, for $z=24a+2+(12b+6)\w$, there exists a $D(z)$-quadruple in $\Z[\w]$.
\end{proposition}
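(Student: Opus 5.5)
I will follow the same strategy as in the proof of Proposition~\ref{prop3_new}, now for the target family $z=24a+2+(12b+6)\w$. For each of the $36$ exceptional classes $(a',b')\bmod 11$ of Proposition~\ref{prop4_old}, the goal is a one-parameter polynomial family $r=x+y\w$ with $x=x(s)$, $y=y(s)$ linear in $s\in\Z$. The requirements are:
\begin{itemize}[label={}]
\item the right-hand side of \eqref{eq:u-from-r} is divisible by $2e+3v$, so that $u\in\Z[\w]$;
\item $z=r^2-uv$ has the shape $24a+2+(12b+6)\w$ with $a=a(s)$, $b=b(s)$ integer quadratics in $s$.
\end{itemize}
Every $s$ in a suitable residue class modulo $11$ then lands in the prescribed class $(a',b')$. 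By the construction in Section~\ref{sec:prelim}, the quadruple $\{u,v,u+v+2r,u+4v+4r\}$ is a $D(z)$-quadruple. The elements $u$, $v$, $u+v+2r$, $u+4v+4r$ are nonzero and distinct except for finitely many $s$, so infinitely many $z$ remain in each class.

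Concretely, I fix $e\in\{1\pm\w,\ \dots\}$ (small divisors of $3$ or $6$, e.g.\ $e=1\pm\w$, or $e=2\pm 2\w$ to get the even real part) and $v$ from a short list of small-norm elements, such as $\pm2\w$, $\pm3\pm\w$, $3\pm5\w$, $-3\pm7\w$, $-9\pm7\w$. The parity of the real part of $z$ ($2$ instead of $5$ modulo $24$) will force slightly different choices of $e$ or $v$ than in Proposition~\ref{prop3_new}.

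For each pair $(e,v)$ the steps are as follows.
\begin{itemize}[label={}]
\item Expand $u=(3r^2-4er+e^2)/(2e+3v)$ and $z=r^2-uv$ in terms of $x,y$, which gives rational quadratic forms $a(x,y)$ and $b(x,y)$.
\item Determine the congruence conditions on $(x,y)$ modulo $\N(2e+3v)$ (times small powers of $2$ and $3$) that make $u$, $a$, $b$ integral.
\item Take $x,y$ to be linear functions of $s$ lying on those congruence lattices.
\item Reduce $a(s)$, $b(s)$ modulo $11$ over the $11$ values of $s$, and record which exceptional classes are hit.
\end{itemize}
Conjugation via Lemma~\ref{lem:scaling} pairs $(e,v)$ with $(\bar e,\bar v)$, sending $b\mapsto -b-1$. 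This symmetry of the list (e.g.\ $(1,0)\leftrightarrow(1,10)$, $(0,3)\leftrightarrow(0,7)$) halves the work.

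The main obstacle is covering \emph{all} $36$ classes. A single family only hits the classes realised by the quadratic map $s\mapsto(a(s),b(s))\bmod 11$, so many pairs $(e,v)$, and several linear substitutions per pair, will be needed. Some classes may resist all small-$v$ choices. I expect the class $(a,b)\equiv(2,5)\pmod{11}$ to be such a case: it is self-conjugate under $b\mapsto -b-1$, so conjugation gives nothing new. For it I will switch, as announced in Section~\ref{sec:prelim}, to fixing $u$ with $\N(u)=19$ (e.g.\ $u=1\pm3\w$ or $-1\pm3\w$) and $e\mid 3z$. I will then impose integrality of $r$ in \eqref{eq:r-from-eu} and of $v$ in \eqref{eq:v-from-r-u}, which reduces to congruences modulo $19$ on a parametrization of $z$, and solve these to produce a linear family in $s$ landing in $(2,5)$.

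The final step is a finite bookkeeping check that the union of the recorded residue classes equals the list in Proposition~\ref{prop4_old}.
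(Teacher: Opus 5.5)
\textbf{Gap 1: your choices of $e$ cannot produce $z$ in this family.} Your architecture is the paper's: fix $e$ and $v$, solve $(2e+3v)u=3r^2-4er+e^2$, take $x,y$ linear in $s$, reduce modulo $11$, and treat $(2,5)$ separately with a fixed $u$ of norm $19$. The problem is the specific $e$ you propose.

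Write $z=24a+2+(12b+6)\w=2\bigl(12a+1+(6b+3)\w\bigr)$. The second factor has odd norm, so $\operatorname{ord}_{\w}(3z)=2$, since $2=-\w^2$. On the other hand, $e+f=2(u+2r)$ forces $\operatorname{ord}_{\w}(e+f)\ge 2$. If $\operatorname{ord}_{\w}(e)=0$, then $\operatorname{ord}_{\w}(f)=2$ and $e+f$ is prime to $\w$, a contradiction. The case $\operatorname{ord}_{\w}(e)=2$ is symmetric. Hence $e$ must be divisible by $\w$ exactly once.

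Concretely:
\begin{itemize}
\item With $e=1\pm\w$ (or $e=1,3$), the element $f=2u+4r-e$ is prime to $\w$, so every $z=ef/3$ produced has odd real part.
\item With $e=2\pm2\w$, both $e$ and $f$ are divisible by $2$, so $4\mid 3z$ and hence $4\mid z$. But the $\w$-coefficient $12b+6$ is $\equiv 2\pmod 4$.
\end{itemize}
Either way, no choice of $v$ or $r$ ever lands in $24a+2+(12b+6)\w$, and the search you describe would return nothing. The paper uses $e=2\pm\w=\pm\w(1\mp\w)$ throughout, with $v\in\{-1-3\w,\,-1+3\w,\,2,\,5-3\w\}$. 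It uses the same $e$ in the norm-$19$ fallback: $u=1+3\w$, $e=2+\w$, $a=209s+13$, $b=209s+5$.

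\textbf{Gap 2: even with the right $e$, this is a search plan, not a proof.} The statement is purely existential. Its proof consists of the explicit parametrizations together with the residue check. Nothing in the method guarantees in advance that the images of $s\mapsto(a(s),b(s))\bmod 11$, for a few small $v$, exhaust all $36$ classes. The paper needs seventeen substitutions spread over four $(e,v)$ pairs, plus a separate family for $(2,5)$, and you yourself leave open that some classes may resist.

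You have to actually exhibit, for every class, data $e$, $v$, $x(s)$, $y(s)$ with $u,a,b$ integral, and record which $s\bmod 11$ hits that class. Your conjugation remark is correct: the list is invariant under $(a,b)\mapsto(a,-b-1)$, and this would halve the work. Self-conjugacy is not what makes $(2,5)$ special, however. The classes $(0,5),(3,5),(4,5),(8,5),(10,5)$ are also self-conjugate and are covered by fixed-$v$ families.
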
  
\proof

Similarly, for each exceptional congruence class modulo $11$ appearing
in Proposition~\ref{prop4_old}, we construct an explicit family of
values $z$ admitting a twice semi-regular $D(z)$-quadruple.

Let $z = 24a+2 + (12b+6) \w$. We begin with the choice $e=2+\w$ and $v=-1-3\w$. This yields
\begin{align*}
u&=\frac{1}{33} (x^2+4 x (4-7 y)+\w (6+7 x^2+2 x (-10+y)+2 (8-7 y) y)-2 (9+(-20+y) y)),\\
a&=\frac{1}{99} (-x^2+x (17-5 y)+(-5+y) (3+2 y)),\\
b&=\frac{1}{198} (5 x^2+x (14-8 y)+2 (34-5 y) y-123).
\end{align*}
Taking
$
x=12s-7, y=15s+21, s\in\Z,
$
the preceding formulas become
\begin{align*}
u&=-162s^{2}-144s+121+(-54s^{2}-294s-171)\w,\\
a&=-6 s^2+8 s +13,\quad b=-15 s^2-36 s-9.
\end{align*}
The corresponding value of $r$ is
$
r=12s-7+(15s+21)\w,
$
and therefore
$
z=-144s^{2}+192s+314+(-180s^{2}-432s-102)\w.
$
In this way we obtain the $D(z)$-quadruple
\begin{align*}
\{
&-162s^{2}-144s+121+(-54s^{2}-294s-171)\w, -1-3\w, \\
&-162s^{2}-120s+106+(-54s^{2}-264s-132)\w,\\
&-162s^{2}-96s+89+(-54s^{2}-234s-99)\w 
\}. 
\end{align*}
A reduction modulo $11$ shows that
$
(a,b)\equiv (4,6), (5,1), (10,5), (1,8) \pmod{11}
$
for
$
s\equiv 1,3,6,8 \pmod{11},
$
respectively. 

Let $x=30s+13, y=-12s+34,  s\in\Z$. Then
\begin{align*}
u&=324s^{2}-660s-393+(108s^{2}+540s-419)\w,\\
a&=12 s^2-62 s-1,\quad b=30 s^2+24 s-60.
\end{align*}
In this way, we obtain another $D(z)$-quadruple.  
Reducing modulo $11$, we get $(a,b)\equiv (4,5), (0,7), (7,6) \pmod{11}$ for $s\equiv 1, 5, 8 \pmod{11}$, respectively. 

Take $x=-30s-43, \ y=12s-24, \ s\in\Z$. Then
\begin{align*}
u&=324s^{2}-60s-905+(108s^{2}+828s+225)\w,\\
a&=12 s^2-54 s-65,\quad b=30 s^2+88 s-36.
\end{align*}
This yields one more $D(z)$-quadruple. 
A reduction modulo $11$ shows that $(a,b)\equiv (3,5), (7,7), (7,3) \pmod{11}$ for $s\equiv 1, 2, 8\pmod{11}$, respectively. 

Substituting $x=-18s+15, y=27s-12, s\in\Z$ into the above formulas, we obtain
\begin{align*}
u&=378s^{2}-480s+143+(-270s^{2}+222s-39)\w,\\
a&=36 s^2-44 s +13,\quad b=-9 s^2+2 s+2.
\end{align*}
This gives another $D(z)$-quadruple. 
Reducing $a$ and $b$ modulo $11$, we obtain $(a,b)\equiv (7,4), (6,4) \pmod{11}$ for $s\equiv 3, 7 \pmod{11}$, respectively. 

For $x=30s+31, \ y=-12s-25, \ s\in\Z$, we get
\begin{align*}
u&=324s^{2}+972s+633+(108s^{2}+48s-139)\w,\\
a&=12 s^2+56 s+ 49,\quad b=30 s^2+60 s +17.
\end{align*}
Hence we obtain another $D(z)$-quadruple. 
Modulo $11$, this gives $(a,b)\equiv (2,0), (5,9) \pmod{11}$ for $s\equiv 5, 10 \pmod{11}$, respectively.

Choose $x=30s-31, y=-12s-9, s\in\Z$. Then
\begin{align*}
u&=324s^{2}-156s-239+(108s^{2}-504s+201)\w,\\
a&=12 s^2+24 s-27,\quad b=30 s^2-64 s+3.
\end{align*}
The corresponding construction gives another $D(z)$-quadruple. 
Reducing modulo $11$, one finds that $(a,b)\equiv (9,2), (3,2) \pmod{11}$ for $s\equiv 1, 7 \pmod{11}$, respectively. 

Let $x=-30s-15, y=12s,  s\in\Z$. For these values,
\begin{align*}
u&=324s^{2}+180s-1+(108s^{2}+204s+57)\w,\\
a&=12 s^2-6 s-5,\quad b=30 s^2+32 s+4.
\end{align*}
This leads to one more $D(z)$-quadruple. 
A reduction modulo $11$ shows that $(a,b)\equiv (1,0), (9,1)$, $(6,0), (0,5) \pmod{11}$ for $s\equiv 1, 2, 6, 9 \pmod{11}$, respectively. 

Now let $x=18s-45, y=-27s+12, s\in\Z$. Then
\begin{align*}
u&=378s^{2}-1248s+503+(-270s^{2}-6s+369)\w,\\
a&=36 s^2-64 s+1,\quad b=-9 s^2-74 s+66.
\end{align*}
In this way, we obtain another $D(z)$-quadruple. 
Moreover, $(a,b)\equiv (8,9), (2,10) \pmod{11}$ for $s\equiv 7, 10 \pmod{11}$, respectively. 

For the choice $x=12s-33,  y=15s-18,  s\in\Z$, the above expressions simplify to
\begin{align*}
u&=-162s^{2}+636s-529+(-54s^{2}+18s+141)\w,\\
a&=-6 s^2+34 s-39,\quad b=-15 s^2+42 s-22.
\end{align*}
This produces another  $D(z)$-quadruple. 
Reducing modulo $11$, we obtain the residue classes $(a,b)\equiv (1,10), (9,9) \pmod{11}$ for $s\equiv 9, 10 \pmod{11}$, respectively.

Now let us take $e=2-\w$ and $v=-1 + 3 \w$. We get 
\begin{align*}
u&=\frac{1}{33} (x^2+4 x (4+7 y)-2 (9+y (20+y))+\w (-6-7 x^2+2 x (10+y)+2 y (8+7 y))),\\
a&=\frac{1}{99} (-x^2+(5+y) (-3+2 y)+x (17+5 y)),\\
b&=\frac{1}{198} (-5 x^2-2 x (7+4 y)+2 y (34+5 y)-75).
\end{align*}
First, take $x=18s+25, y=27s+5, s\in\Z$. Then we obtain 
\begin{align*}
u&=378s^{2}+636s+129+(270s^{2}-6s-97)\w,\\
a&=36 s^2+40 s+5,\quad b=9 s^2-32 s-20.
\end{align*}
Thus we get another $D(z)$-quadruple, and modulo $11$ this covers the residue classes $(a,b)\equiv (1,2), (0,3)\pmod{11} $ for $s\equiv 6, 8 \pmod{11}$, respectively. 

Let $x=12s+25, y=-15s+5, s\in\Z$. Then
\begin{align*}
u&=-162s^{2}-216s+129+(54s^{2}-210s-97)\w,\\
a&=-6 s^2-24 s+5,\quad b=15 s^2-16 s-20.
\end{align*}
We arrive at another $D(z)$-quadruple. 
Reducing modulo $11$, one obtains $(a,b)\equiv (8,1) \pmod{11}$ for $s\equiv 1 \pmod{11}$, respectively. 

Choose $x=12s+27, y=-15s-3, s\in\Z$. This gives
\begin{align*}
u&=-162s^{2}-336s-31+(54s^{2}-126s-141)\w,\\
a&=-6 s^2-26 s-7,\quad b=15 s^2-18.
\end{align*}
Hence this provides another $D(z)$-quadruple. 
Modulo $11$, we get $(a,b)\equiv (4,4) \pmod{11}$ for $s\equiv 0 \pmod{11}$, respectively. 

We next take $x=-12s+25, y=15s-17, s\in\Z$. The corresponding values are
\begin{align*}
u&=-162s^{2}+480s-327+(54s^{2}-54s-29)\w,\\
a&=-6 s^2+24 s-19,\quad b=15 s^2-28 s+8.
\end{align*}
This yields another $D(z)$-quadruple in the required family.  
A reduction modulo $11$ shows that $(a,b)\equiv (3,8), (9,8) \pmod{11}$ for $s\equiv 0, 7 \pmod{11}$, respectively. 

Now, let $x=30s+17,  y=12s-6,  s\in\Z$. Then
\begin{align*}
u&=324s^{2}+60s-65+(-108s^{2}-252s-45)\w,\\
a&=12 s^2-6 s-5,\quad b=-30 s^2-32 s-5.
\end{align*}
In this way, we obtain another $D(z)$-quadruple. 
Reducing modulo $11$, we get $(a,b)\equiv (6,6), (6,10) \pmod{11}$ for $s\equiv 0, 6 \pmod{11}$, respectively. 

Further, set $e=2+\w$ and $v=2$. We have 
\begin{align*}
u&=\frac{1}{18} (5 x^2+4 x (-4+y)+2 (4-5 y) y+\w (6-x^2+2 (-8+y) y+2 x (-2+5 y))+6), \\
a&=\frac{1}{54} (x^2-x (-4+y)-2 (3+y+y^2)),\\
b&=\frac{1}{108} (4 x+x^2+8 (2+x) y-2 y^2-60).
\end{align*}
Take $x=-18s+24, y=18s-18, \ s\in\Z$. Then
\begin{align*}
u&=-162s^{2}+312s-145+(-162s^{2}+384s-225)\w,\\
a&=-8 s+9,\quad b=-27 s^2+62 s-35.
\end{align*}
This yields another $D(z)$-quadruple. 
A reduction modulo $11$ shows that $(a,b)\equiv (5,3) \pmod{11}$ for $s\equiv 6 \pmod{11}$, respectively. 

Take now  $e=2+\w$ and $v=5-3\w$. We obtain 
\begin{align*}
u&=\frac{1}{153} (19 x^2+88 y-38 y^2-4 x (8+7 y)+\w (7 x^2-2 (3+y) (-5+7 y)+x (-44+38 y))-6),\\
a&=\frac{1}{459} (2 x^2+x (53-11 y)-(3+y) (19+4 y)),\\
b&=\frac{1}{918} (62 x+11 x^2+4 (53+4 x) y-22 y^2-543).
\end{align*}
Substituting $x=24s-5,  y=-33s-17,  s\in\Z$ into the above formulas, we obtain
\begin{align*}
u&=-54s^{2}-288s-93+(-270s^{2}-174s+1)\w,\\
a&=12 s^2-4,\quad b=-33 s^2-40 s-10.
\end{align*}
This gives another $D(z)$-quadruple. 
Reducing $a$ and $b$ modulo $11$, we obtain $(a,b)\equiv (8,5) \pmod{11}$ for $s\equiv 1 \pmod{11}$, respectively. 

Further, if  $x=-24s+23,  y=33s+21,  s\in\Z$, yields 
\begin{align*}
u&=-54s^{2}-504s-125+(-270s^{2}-114s+93)\w,\\
a&=12 s^2-28 s-12,\quad b=-33 s^2-36 s+10.
\end{align*}
Hence we obtain another $D(z)$-quadruple. 
Modulo $11$, this gives $(a,b)\equiv (5,7) \pmod{11}$ for $s\equiv 1 \pmod{11}$, respectively.

The only residue class from Proposition~\ref{prop4_old} not covered by the preceding fixed $v$ constructions is $(2,5)$. 
To deal with this final case, we revert to the approach from \cite{Soldo2013} in which one fixes $u$ and a divisor $e\mid 3z$. Fix
$
u=1+3\w,$ and $e=2+\w.
$
From \eqref{eq:ef}, for $z=24a+2+(12b+6)\w$ we obtain
$
r=(6a+3b+2)+3(b-a)\w.
$
Furthermore, from \eqref{eq:v-from-r-u} we get
\[
v=
-\frac{198a^2-180ab+72a-99b^2-12b+34}{19}
-\frac{3(30a^2+66ab+4a-15b^2+12b+4)}{19}\w.
\]
Thus $v\in\Z[\w]$ whenever both numerators are divisible by $19$. 
For the residue class $(a,b)\equiv(2,5)\pmod{11}$, this is achieved by taking 
$a=209s+13$, $b=209s+5$, $s\in\Z$. 
Then
\begin{align*}
r&=1881s+95-24\w,\\
z&=5016s+314+(2508s+66)\w,\\ 
v&=186219s^2-10758s-1063+(-558657s^2-60522s-1437)\w, 
\end{align*}
and we obtain the corresponding $D(z)$-quadruple. 
\endproof

As we mentioned, Proposition~\ref{prop4_new} immediately implies 
\begin{proposition} \label{prop5_new}
For each exceptional residue class $(a',b') \pmod {11}$ appearing in Proposition~\ref{prop5_old}, there exist
infinitely many pairs $a,b\in\mathbb Z$ with $a\equiv a' \pmod {11}$, $b\equiv b' \pmod {11}$, such that, for
$ z=48a+44+(24b+12)\w$, 
there exists a $D(z)$-quadruple in $\Z[\w]$.
\end{proposition}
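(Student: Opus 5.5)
The plan is to derive Proposition~\ref{prop5_new} from Proposition~\ref{prop4_new}, and from Proposition~\ref{prop4_old} where needed, using the scaling Lemma~\ref{lem:scaling} with $\xi=\w$. Since $\w^2=-2$, any $D(z)$-quadruple $\{a_1,a_2,a_3,a_4\}$ yields the set $\{\w a_1,\w a_2,\w a_3,\w a_4\}$ with the property $D(-2z)$. Its elements are nonzero and pairwise distinct because multiplication by $\w$ is injective on $\Z[\w]$. So it is enough to show that the family $z=24A+2+(12B+6)\w$ is mapped bijectively onto the family $z'=48a+44+(24b+12)\w$ by $z\mapsto -2z$, and then to track residue classes modulo $11$.

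\textbf{Step 1 (the map between families).} For $z=24A+2+(12B+6)\w$ we compute
\[
-2z=-48A-4-(24B+12)\w=48(-A-1)+44+\bigl(24(-B-1)+12\bigr)\w .
\]
Hence $-2z=z'$ with $a=-A-1$ and $b=-B-1$. Conversely, every pair $(a,b)\in\Z^2$ arises from the integer pair $A=-a-1$, $B=-b-1$. The map $(A,B)\mapsto(a,b)$ is therefore a bijection of $\Z^2$, and it reduces modulo $11$ to the bijection $(A',B')\mapsto(-A'-1,-B'-1)$ of residue classes.

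\textbf{Step 2 (transfer of residue classes).} Fix an exceptional class $(a',b')$ from Proposition~\ref{prop5_old} and set $(A',B')\equiv(-a'-1,-b'-1)\pmod{11}$. There are two cases.
\begin{itemize}
\item If $(A',B')$ is one of the exceptional classes of Proposition~\ref{prop4_old}, then Proposition~\ref{prop4_new} provides infinitely many pairs $(A,B)$ in that class for which a $D(z)$-quadruple exists.
\item Otherwise, Proposition~\ref{prop4_old} gives a $D(z)$-quadruple for every $(A,B)$ in that class, and a residue class contains infinitely many pairs.
\end{itemize}
In both cases, applying Step~1 and Lemma~\ref{lem:scaling} to these infinitely many pairs gives infinitely many distinct pairs $(a,b)=(-A-1,-B-1)$ with $a\equiv a'$, $b\equiv b'\pmod{11}$. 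For each of them $z'=48a+44+(24b+12)\w$ admits a $D(z')$-quadruple.

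As a consistency check, I expect the $36$ classes of Proposition~\ref{prop5_old} to be exactly the images of the $36$ classes of Proposition~\ref{prop4_old} under $(A',B')\mapsto(-A'-1,-B'-1)$. For example, $(0,3)\mapsto(10,7)$ and $(2,5)\mapsto(8,5)$, and both images appear in the list. Verifying this correspondence fully is routine bookkeeping, and the case split above makes it unnecessary for the argument.

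There is no real obstacle here. The only points requiring care are the sign and offset in $a=-A-1$, $b=-B-1$, and the remark that scaling preserves both nonzeroness and distinctness of the quadruple elements. Because the paper's constructions produce twice semi-regular quadruples, I would also observe that scaling by $\w$ maps $\{u,v,u+v+2r,u+4v+4r\}$ to $\{\w u,\w v,\w u+\w v+2\w r,\w u+4\w v+4\w r\}$, and that $(\w u)(\w v)-2z=(\w r)^2$. So the resulting quadruples are again twice semi-regular, which is consistent with the abstract.
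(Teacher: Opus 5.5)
Your proposal is correct and is essentially the paper's argument: scaling by $\xi=\w$ in Lemma~\ref{lem:scaling} with $(a,b)=(-A-1,-B-1)$ transfers the families of Proposition~\ref{prop4_new} to the classes of Proposition~\ref{prop5_old}. Your case split via Proposition~\ref{prop4_old} removes the need for the residue bookkeeping the paper performs, although in fact the map $(A',B')\mapsto(10-A',10-B')$ sends the $36$ exceptional classes of Proposition~\ref{prop4_old} exactly onto those of Proposition~\ref{prop5_old}, so only your first case ever occurs.
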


Namely, taking $\xi=\w$ in Lemma~\ref{lem:scaling}, every $D(z_0)$-quadruple gives rise to a $D(z)$-quadruple, where
$
z=48(-a-1)+44+(24(-b-1)+12)\w. 
$
Thus the parametrized families from Proposition~\ref{prop4_new} yield parametrized
families in the corresponding exceptional classes from Proposition~\ref{prop5_old}. 
Checking the residues $(-a-1,-b-1)\pmod{11}$ for the pairs $(a,b)$ obtained, as functions of the parameter $s$, 
in the proof of Proposition~\ref{prop4_new} gives all exceptional classes
listed in Proposition~\ref{prop5_old}.


\section{A computational search in the exceptional congruence classes}\label{sec:comput}
We performed a computational search in order to test the extent to
which the regular constructions considered in this paper cover the
remaining exceptional congruence classes. More precisely, the search was
carried out for all representatives satisfying
$-100\leq a\leq 100$ and $-100\leq b\leq 100$ in the exceptional
congruence classes modulo $11$.

The search was based on the regular construction described earlier, with
the aim of finding twice semi-regular $D(z)$-quadruples. For
$u=p+q\w$, we first used the bound $|p|,|q|\leq 5000$, considering
all $e=x+y\w$ with $|x|,|y|\leq 50$. For the cases that still remained,
we then let $e$ run through all divisors of $3z$ and repeated the search
with $|p|,|q|\leq 5000$. Finally, for the remaining cases up to
conjugation, we also performed an extended search with
$|p|,|q|\leq 10000$ and $|x|,|y|\leq 100$, whenever this computation was
feasible within the available running time.

Here, a tested pair $(e,u)$ means a choice of an admissible divisor
$e\mid 3z$ in the prescribed range, together with a value
$u=p+q\w$ satisfying the corresponding bound, for which the regular
construction was checked. In the extended search with
$|p|,|q|\leq 10000$ and $|x|,|y|\leq 100$, the number of tested pairs
$(e,u)$ ranged from $8\cdot 10^8$ to $2.2\cdot 10^9$ for the completed
cases, depending on the number of admissible divisors $e$.

By Lemma~\ref{lem:scaling}, it is enough to consider the remaining
cases up to conjugation. For
\[
 z=24a+c+(12b+6)\w,\quad c\in\{2,5\},
\]
we have
\[
 \overline z
 =24a+c-(12b+6)\w
 =24a+c+(12(-b-1)+6)\w.
\]
Hence conjugation acts on the parameters by
\[
 (a,b)\mapsto (a,-b-1),
\]
and the entries in the tables below are listed up to this symmetry.

The search produced several additional twice semi-regular
$D(z)$-quadruples, but it did not produce examples for all
representatives in the tested range. For all representatives in the
tested range except those listed in Tables~\ref{tab:remaining_24a5}
and~\ref{tab:remaining_24a2}, the search found a  twice semi-regular $D(z)$-quadruple, up to
the above conjugation. The cases in this range for which our
regular construction search did not find a quadruple are listed in the
following tables. These tables should therefore be understood only as a
record of the limitations of this particular regular search. They do not
assert non-existence of $D(z)$-quadruples.

We emphasize that the first column records the congruence class of the
displayed representative only. The tables are not meant to say that an
entire congruence class remains unresolved. In fact, for the other
representatives in these classes within the range
$-100\leq a,b\leq 100$, explicit quadruples arising from the regular
construction were found, except for the displayed values and their
conjugates.

The corresponding candidates in the third exceptional family are not
listed separately. They are obtained from the second family by the
transformation $z\mapsto -2z$, equivalently by the congruence
transformation used earlier in the proof of Proposition~\ref{prop5_new}.

These computations suggest that the stronger assertion, namely existence
of a $D(z)$-quadruple for every $a,b$ in the exceptional families, is not
likely to follow from the regular constructions considered here. In
particular, several small representatives in the exceptional classes
resisted an extensive regular search. If $D(z)$-quadruples exist for
these values, they may have to lie outside the twice semi-regular construction, or arise from a different method.

\begin{table}[h!]
\centering
\begin{tabular}{c c c c}
\hline
$(a,b)\bmod 11$ & conjugate class & representative $(a,b)$ & $z$ \\
\hline
$(0,3)$ & $(0,7)$ & $(0,3)$ & $5+42\w$ \\
$(1,9)$ & $(1,1)$ & $(1,9)$ & $29+114\w$ \\
$(1,1)$ & $(1,9)$ & $(-54,56)$ & $-1291+678\w$ \\
$(2,1)$ & $(2,9)$ & $(46,100)$ & $1109+1206\w$ \\
$(2,9)$ & $(2,1)$ & $(57,9)$ & $1373+114\w$ \\
$(4,3)$ & $(4,7)$ & $(70,80)$ & $1685+966\w$ \\
$(6,10)$ & $(6,0)$ & $(-16,76)$ & $-379+918\w$ \\
$(6,0)$ & $(6,10)$ & $(39,0)$ & $941+6\w$ \\
$(8,6)$ & $(8,4)$ & $(-80,61)$ & $-1915+738\w$ \\
$(8,4)$ & $(8,6)$ & $(-69,26)$ & $-1651+318\w$ \\
$(8,4)$ & $(8,6)$ & $(-3,4)$ & $-67+54\w$ \\
$(8,6)$ & $(8,4)$ & $(96,83)$ & $2309+1002\w$ \\
$(10,10)$ & $(10,0)$ & $(-12,43)$ & $-283+522\w$ \\
\hline
\end{tabular}
\caption{Individual values, up to conjugation, for which the regular search did not find a  twice semi-regular $D(z)$-quadruple for $z=24a+5+(12b+6)\w$.}
\label{tab:remaining_24a5}
\end{table}

\begin{table}[h!]
\centering
\begin{tabular}{c c c c}
\hline
$(a,b)\bmod 11$ & conjugate class & representative $(a,b)$ & $z$ \\
\hline
$(0,5)$ & $(0,5)$ & $(0,16)$ & $2+198\w$ \\
$(1,0)$ & $(1,10)$ & $(-10,99)$ & $-238+1194\w$ \\
$(2,5)$ & $(2,5)$ & $(-9,27)$ & $-214+330\w$ \\
$(3,8)$ & $(3,2)$ & $(-52,85)$ & $-1246+1026\w$ \\
$(3,2)$ & $(3,8)$ & $(-30,35)$ & $-718+426\w$ \\
$(3,2)$ & $(3,8)$ & $(3,79)$ & $74+954\w$ \\
$(3,2)$ & $(3,8)$ & $(25,90)$ & $602+1086\w$ \\
$(3,8)$ & $(3,2)$ & $(69,96)$ & $1658+1158\w$ \\
$(3,5)$ & $(3,5)$ & $(-74,16)$ & $-1774+198\w$ \\
$(4,6)$ & $(4,4)$ & $(37,39)$ & $890+474\w$ \\
$(4,5)$ & $(4,5)$ & $(70,38)$ & $1682+462\w$ \\
$(4,5)$ & $(4,5)$ & $(70,49)$ & $1682+594\w$ \\
$(5,3)$ & $(5,7)$ & $(-39,36)$ & $-934+438\w$ \\
$(5,7)$ & $(5,3)$ & $(-17,51)$ & $-406+618\w$ \\
$(5,7)$ & $(5,3)$ & $(-6,18)$ & $-142+222\w$ \\
$(8,1)$ & $(8,9)$ & $(-47,67)$ & $-1126+810\w$ \\
$(9,9)$ & $(9,1)$ & $(-2,9)$ & $-46+114\w$ \\
$(9,1)$ & $(9,9)$ & $(9,1)$ & $218+18\w$ \\
$(9,9)$ & $(9,1)$ & $(42,9)$ & $1010+114\w$ \\
\hline
\end{tabular}
\caption{Individual values, up to conjugation, for which the regular search did not find a  twice semi-regular $D(z)$-quadruple for $z=24a+2+(12b+6)\w$.}
\label{tab:remaining_24a2}
\end{table}

\newpage
\subsection{A non-existence result within the regular construction} 

We now give an exact finite verification for one of the entries in
Table~\ref{tab:remaining_24a5}.  The purpose of the calculation below is to show that the
failure of the computer search is not always merely a matter of search
bounds.  For the value $z=29+114\omega$, corresponding to
$(a,b)=(1,9)$ in the first family, we prove that no proper twice
semi-regular $D(z)$-quadruple arises from the regular construction.
Thus, if a $D(z)$-quadruple exists for this value of $z$, it must lie outside the twice semi-regular construction considered here.
 Of course, this does not prove non-existence of $D(z)$-quadruples
for this value of $z$.

\begin{proposition}
Let $z=29+114\w$.  There is no proper $D(z)$-quadruple of the form $\{u,v,u+v+2r,u+4v+4r\}$, $uv+z=r^2$.
\end{proposition}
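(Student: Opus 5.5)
The plan is to reduce the statement to a finite computation using the parametrization of Section~\ref{sec:prelim}. Suppose $\{u,v,u+v+2r,u+4v+4r\}$ is a proper $D(z)$-quadruple with $uv+z=r^2$. Then, as shown there, there are $e,f\in\Z[\w]$ with
\[
ef=3z,\qquad 2u+4r=e+f .
\]
In particular $u\neq 0$, $e$ runs over the finitely many divisors of $3z$ (up to the sign ambiguity, which just swaps $e$ and $f$ or changes signs), and $r=(e+f-2u)/4$ is determined by $(e,u)$.

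The key step is to show that, for each fixed $e$, only finitely many $u$ are possible. From $v=(r^2-z)/u\in\Z[\w]$ we get $u\mid r^2-z$, hence $u\mid 16(r^2-z)=(e+f-2u)^2-16z$. Reducing modulo $u$ and using $ef=3z$ gives
\[
(e+f)^2-16z=(e-f)^2+4ef-16z=(e-f)^2-4z,
\]
so
\[
u \mid D_e:=(e-f)^2-4z .
\]
We must check that $D_e\neq 0$. If $D_e=0$, then $4z$ would be a square, hence $z$ itself would be a square in $\Z[\w]$. Writing $z=(p+q\w)^2$ forces $pq=57$ and $p^2-2q^2=29$, and none of the factorizations $(\pm1,\pm57)$, $(\pm3,\pm19)$, $(\pm19,\pm3)$, $(\pm57,\pm1)$ satisfy this. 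Thus every admissible $u$ is a divisor of the fixed nonzero element $D_e$, and the search space becomes a finite list of pairs $(e,u)$.

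Next, I would carry out the enumeration. First factor $\N(3z)=9\cdot \N(z)=9\cdot 26833$ in $\Z$, and lift the factorization to primes of $\Z[\w]$ to list all divisors $e$ of $3z$ up to units $\pm1$. For each $e$, compute $f=3z/e$ and $D_e$, and list the divisors $u$ of $D_e$. For each pair $(e,u)$, check:
\begin{enumerate}[label=(\roman*)]
\item that $e+f-2u\equiv 0 \pmod 4$, so that $r\in\Z[\w]$;
\item that $u\mid r^2-z$ exactly, not merely $u\mid 16(r^2-z)$.
\end{enumerate}
Condition (ii) is genuinely stronger and has to be checked directly when $u$ shares factors with $2$, i.e.\ with powers of $\w$. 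Before enumerating, I would exploit the $2$-adic structure to prune: $z$ has odd norm and $3z$ is not divisible by $\w$, so $e$ and $f$ are both odd. Consequently condition (i) forces strong restrictions on $u$ modulo $2$, which should eliminate most divisors at once.

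For every surviving pair, I would compute $v$ and the full quadruple and verify that it is not proper. This means showing that some element vanishes or two elements coincide, i.e.\ $v=0$, $u+v+2r=0$, $u+4v+4r=0$, or a coincidence such as $u=v$ or $u+v+2r=u+4v+4r$. Where convenient, I would use the conjugation symmetry of Lemma~\ref{lem:scaling} together with the swap $e\leftrightarrow f$ to halve the number of cases.

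I expect the main obstacle to be bookkeeping rather than theory. The divisors $D_e$ may have fairly large norm and many divisors, and the powers of $\w$ dividing $D_e$ require the exact divisibility check in (ii). The first thing I would try is the mod-$4$ and mod-$\w$ analysis of condition (i), aiming to show that $u$ must be odd or of a very restricted $2$-part; after that the remaining verification is a short finite check.
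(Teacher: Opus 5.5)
Your plan is essentially the paper's proof. Since $ef=3z$, your $D_e=(e-f)^2-4z$ is exactly the paper's $T_e=(e+f)^2-16z$, and the paper likewise runs over $e\in\{1,3,1+\w,1-\w\}$ (up to sign and $e\leftrightarrow f$), checks the divisors $u\mid T_e$ for $r,v\in\Z[\w]$, and finds only degenerate sets. Your mod-$2$ pruning ($u\equiv (e+f)/2\pmod 2$) is a useful addition: once $T_{1\pm\w}$ is factored, it disposes of $e=1\pm\w$ at once, because every odd prime factor there is $\equiv 1\pmod 2$ while $u$ must be $\equiv 1+\w$. However, drop the appeal to conjugation: it relates $z$ to $\bar z\neq z$, so it gives no symmetry for this single value.
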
 

\begin{proof}
We use the notation and the regular construction from Section~\ref{sec:prelim}.  By \eqref{eq:ef}, we choose a divisor $e\mid 3z$, put $f=3z/e$, and write
$S=e+f$.  Substituting  $r=(S-2u)/4$ from \eqref{eq:r-from-eu} into \eqref{eq:v-from-r-u}, we get
\[
        16(r^2-z)
        =
        (S-2u)^2-16z
        =
        (S^2-16z)+4u(u-S).
\]
Hence, whenever \eqref{eq:v-from-r-u} gives $v\in\Z[\w]$, we must have
$u\mid r^2-z$, and therefore $u\mid T_e$, where  $ T_e=S^2-16z$. 
Thus for each fixed divisor $e\mid 3z$, it remains only to
consider divisors $u\mid T_e$ for which both \eqref{eq:r-from-eu} and
\eqref{eq:v-from-r-u} define elements in $\Z[\w]$. The ring $\Z[\w]$ is norm-Euclidean and hence a unique factorization domain. 
In each of the factorizations below, the norms of all displayed factors other than powers of $\w$ are rational primes. Therefore 
the displayed factorizations determine all divisors $u\mid T_e$.

We have
$
        N(z)=26833 
$
and $26833$ is prime.  Since $3=(1+\w)(1-\w)$ in $\Z[\w]$, it is enough,
up to multiplication by the units $\pm1$ and up to interchanging $e$ and
$f$, to consider 
\[
        e=1,
        \quad
        e=3,
        \quad
        e=1+\w,
        \quad
        e=1-\w.
\]

First let $e=1$.  Then
$
        S=1+3z=88+342\w,
$
and
$
        T_1=
        -226648+58368\w
        =
        -8(57-7\w)(513-65\w).
$
A direct check of all divisors $u\mid T_1$, using the above
factorization and the fact that the only units in $\Z[\w]$ are $\pm1$,
shows that the only divisors for which the corresponding elements
$r$ and $v$ defined by \eqref{eq:r-from-eu} and
\eqref{eq:v-from-r-u} both belong to $\Z[\w]$ are, up to sign, 
$u=14+57\w$ and  $u=130+513\w$.  This gives 
\[
        \{14+57\w,\,
          14+57\w,\,
          58+228\w,\,
          130+513\w\},
\]
and
\[
        \{130+513\w,\,
          14+57\w,\,
          58+228\w,\,
          14+57\w\},
\]
which are not proper quadruples, respectively.  

Next let $e=3$.  Then
$
        S=3+z=32+114\w,
$
and
$
        T_3=
        -25432+5472\w
        =
        -8(57-7\w)(57-5\w).
$
Similarly, we have $u=10+57\w$ and $ u=14+57\w$. 
They yield
\[
        \{10+57\w,\,
          -2,\,
          14+57\w,\,
          14+57\w\}
\]
and
\[
        \{14+57\w,\,
          -2,\,
          14+57\w,\,
          10+57\w\}, 
\]
respectively. Thus neither case gives a proper quadruple.

If we take  $e=1+\w$, we have $f=257+85\w$ and $S=258+86\w$. Hence, 
$
        T_{1+\w}=
        51308+42552\w
        =
        -4(1-6\w)(9-2\w)(213-86\w).
$
A direct check of the divisors $u\mid T_{1+\w}$ shows that no
divisor gives $r,v \in \Z[\w]$. 

Finally, let $e=1-\w$.  Then
$f=-199+143\w$, $S=-198+142\w$,
and
$
        T_{1-\w}= 
        -1588-58056\w
        =
        4(3-2\w)(11-12\w)(243+28\w).
$ 
Again, no divisor $u\mid T_{1-\w}$ gives $r,v \in \Z[\w]$. 

The cases obtained by replacing $e$ with $-e$ give the same possibilities
up to multiplication by the units $\pm1$.  Thus all possible choices of
$e\mid 3z$ have been exhausted, up to units and up to interchanging $e$
and $f$.  In every case, either at least one of the elements $r$ and $v$ does not
belong to $\Z[\w]$, or the resulting set has a repeated entry. 
Hence no proper quadruple is obtained.

\end{proof}
This representative example shows that the twice semi-regular
constructions considered in this paper cannot, by themselves, be
expected to resolve all individual exceptional values listed in the
tables. The same type of finite divisor enumeration should nevertheless
be feasible for many of the remaining entries.  In fact, for most
entries in Table~1 the norm of $z$ is a prime. Hence, using
$3=(1+\omega)(1-\omega)$ and the fact that the only units in $\Z[\w]$ are
$\pm1$, the possible divisors $e\mid 3z$ reduce, up to units and the
interchange of $e$ and $3z/e$, to only four representative choices.
For most entries in Table~2 one has the norm $N(z)=4p$, with a 
prime $p$.  In
these cases $z=2z_0$ with $N(z_0)=p$, and since
$2=-\omega^2$, we have
$3z=-(1+\omega)(1-\omega)\omega^2z_0$.  Hence the divisor structure of
$3z$ is still very restricted. That is, up to units and the interchange of
$e$ and $3z/e$, only a small finite number of choices for $e$ has to be
checked. Thus the remaining entries appear to be amenable to analogous exact
divisor computations by the same method, although the resulting
enumerations may be longer.



\section{The individual exceptional values $-1$ and $1\pm 2\w$}\label{sec:individ}
In this section, we give some remarks on the three individual exceptional values 
$z\in\{-1,\,1\pm 2\omega\}$ which appear separately in
Theorem~\ref{tm_Dujella_Soldo}, and are not addressed by the
congruence-class constructions of the preceding sections. 
Note that $1\pm 2\w=-1\cdot (1\mp \w)^2$. 
Hence, by Lemma~\ref{lem:scaling}, the existence of a
$D(-1)$-quadruple in $\Z[\w]$ would imply the existence of
$D(1\pm 2\w)$-quadruples. The converse, however, does not necessarily
follow.

This makes the study of $D(-1)$-quadruples in $\Z[\w]$ particularly
natural, and more generally in imaginary quadratic rings of integers.
At present, no example of a $D(-1)$-quadruple in $\Z[\w]$ is known to
us. In addition, extensive computer searches have failed to find such an
example, which supports the conjecture that no $D(-1)$-quadruple exists
in this ring. This motivates the study of a more local extension
problem, i.e., for a given exact $D(-1)$-pair or $D(-1)$-triple, can it
be extended to a $D(-1)$-quadruple? In joint work with coauthors, the
second author has obtained several results in this direction, proving
the non-extendibility of certain $D(-1)$-pairs and $D(-1)$-triples in
rings $\Z[\sqrt{-k}]$, where $k>0$. Relevant references include, for
instance, \cite{DujeWeb} and further papers listed on the second author's
personal webpage. We also recall that in the classical case of $\Z$, it
was proved in \cite{BoCiMo} that no $D(-1)$-quadruple exists.

Let us briefly discuss the remaining values $z=1\pm 2\w$. By
conjugation, as in Lemma~\ref{lem:scaling}, the existence of a
$D(1+2\w)$-quadruple would imply the existence of a
$D(1-2\w)$-quadruple. In our computer search, however, we were unable to
find examples of such $D(z)$-quadruples. Nevertheless, $D(z)$-triples
do exist. For instance, the set
$
\{-2,1,1\pm 2\w\}
$ 
is a $D(1\pm 2\w)$-triple.

We also note that there is no $D(z)$-triple containing two positive
integers. Indeed, suppose that $u,v$ are positive integers and that
$ 
uv+z=r^2,
$ 
for some $r=m+n\w \in \Z[\w]$. Since
\[
(m+n\w)^2=m^2-2n^2+2mn\w,
\]
comparison of coefficients gives
\[
uv+1=m^2-2n^2,\quad mn=\pm 1.
\]
This yields $uv=-2$, which is impossible for positive integers $u$ and
$v$.

There are also infinite families of $D(z)$-triples. For example, the set
\[
S_n=\{1,n^2-1\mp 2\w,n^2+2n\mp 2\w\},\quad n\in \Z,
\]
is a $D(z)$-triple. We can show that none of these triples can be
extended to a $D(z)$-quadruple.

A direct computation shows that the only squares modulo $4$ in
$\Z[\w]$ are
$
0, 1, 2, 3+2\w.
$ 
Suppose that there exists $d\in \Z[\w]$ such that all three elements
\[
d+z,\quad (n^2-1\mp 2\w)d+z,\quad (n^2+2n\mp 2\w)d+z
\]
are squares in $\Z[\w]$. Since $z=1\pm 2\w\equiv 1+2\w \pmod 4$, the
condition that $d+z$ be a square modulo $4$ implies that
\[
d\equiv 3+2\w, 2\w, 1+2\w, 2 \pmod 4,
\]
respectively.

On the other hand, according to the parity of $n$, the two elements
$n^2-1\mp 2\w$, $n^2+2n\mp 2\w$ 
are congruent modulo $4$ to
$2\w$ and $3+2\w$
in some order. Hence, for the four possible congruence classes of $d$
modulo $4$, we obtain
\[
\bigl(2\w d+z,\,(3+2\w)d+z\bigr)
\equiv
(1,2+2\w),\,\,
(1+2\w,1),\,\,
(1,2\w),\,\,
(1+2\w,3+2\w)
\pmod 4,
\]
respectively. However, the elements
$1+2\w$, $2\w$, $2+2\w$ 
are not squares modulo $4$. This gives a contradiction in each of the
four cases. Therefore, the triple $S_n$ cannot be extended to a
$D(z)$-quadruple.

\section*{Acknowledgement} 
The authors were supported by the Croatian Science Foundation under the
project no. IP-2022-10-5008 (TEBAG). The first author acknowledge support from the
project ``Implementation of cutting-edge research and its application as part of the
Scientific Center of Excellence for Quantum and Complex Systems, and Representations 
of Lie Algebras'', Grant No. PK.1.1.10.0004, co-financed by the European
Union through the European Regional Development Fund – Competitiveness and
Cohesion Programme 2021-2027.  This research was funded by the European union: 
NextGenerationEU through the National Recovery and Resilience Plan 2021-2026 under the project 
Advanced Algorithms and Optimization Models Supported by Mathematical Theory – OptimaAI (581-UNIOS-54) and   
Institutional grant of University of Zagreb, Faculty of Science (IK IA 1.1.3. Impact4Math). 

\bibliographystyle{plain}

\noindent
{\sc
Andrej Dujella\\
Department of Mathematics\\
University of Zagreb\\
Bijeni\v{c}ka cesta 30\\
HR-10000 Zagreb, Croatia\\
e-mail: {\rm duje@math.hr}

\bigskip
\noindent
Ivan Soldo\\
School of Applied Mathematics and Informatics\\
University of Osijek\\
Trg Ljudevita Gaja 6\\
HR-31000 Osijek, Croatia\\
e-mail: {\rm isoldo@mathos.hr}
}

\end{document}